\documentclass[11pt]{article}

\usepackage[T1]{fontenc}
\usepackage{lmodern}
\usepackage{amsmath,amssymb,amsfonts,amsthm}
\usepackage{mathtools}
\usepackage{graphicx}
\usepackage{enumerate}
\usepackage{hyperref}
\usepackage[a4paper,left=3cm,right=3cm,top=3cm,bottom=3cm]{geometry}

\hypersetup{
  colorlinks=true,
  linkcolor=black,
  citecolor=black,
  urlcolor=black
}

\newtheorem{theorem}{Theorem}[section]
\newtheorem{lemma}[theorem]{Lemma}
\newtheorem{corollary}[theorem]{Corollary}

\newtheorem{question}[theorem]{Question}

\theoremstyle{definition}
\newtheorem{definition}[theorem]{Definition}

\theoremstyle{remark}
\newtheorem{remark}[theorem]{Remark}

\def\1{1\!\hspace{-.08cm}1}

\title{Extremal problems on the $p$-Seidel energy of graphs}

\author{Alexander Guterman\thanks{Department of Mathematics, Bar-Ilan University, Ramat-Gan, Israel (alexander.guterman@biu.ac.il)}
\and
Shib Sankar Saha\thanks{Corresponding author: Department of Mathematics, Bar-Ilan University, Ramat-Gan, Israel (shibkol2019@gmail.com/sahashi@biu.ac.il)}
}

\begin{document}
\maketitle

\begin{abstract}
Let $G$ be a graph with vertex set $\{v_1,\ldots,v_n\}$. The \emph{Seidel matrix} $S(G)$ is the $n\times n$ symmetric matrix with
zero diagonal whose $(i,j)$-entry, for $i\ne j$, is $-1$ if $v_i$ and $v_j$ are adjacent and $1$ otherwise. If $\lambda_1,\ldots,\lambda_n$ are the Seidel eigenvalues of $G$, then the $p$-Seidel energy of $G$ is defined by
\[
\mathcal{E}_p(S(G))
=
\sum_{i=1}^n |\lambda_i|^p,
\qquad p>0.
\]
For $p>2$, we establish a universal lower bound for the $p$-Seidel energy of graphs of order $n$ and characterize the equality case in terms of symmetric conference Seidel matrices. Consequently, whenever a symmetric conference matrix of order $n$ exists, this bound determines the minimum $p$-Seidel energy and all graphs attaining it. For $0<p<2$, we obtain the corresponding upper bound and equality characterization; hence, whenever a symmetric conference matrix of order $n$ exists, the maximum $p$-Seidel energy is determined. In addition, for $p>2$, $n\equiv1\pmod4$, and $r=\frac{n-1}{2}$, we establish a lower bound for the $p$-Seidel energy among all $r$-regular graphs of order $n$ and characterize its equality case. Consequently, whenever a conference graph of order $n$ exists, this bound determines the minimum $p$-Seidel energy and all graphs attaining it. In particular, when $n$ is a prime power with $n\equiv1\pmod4$, the Paley graph $P(n)$ is a conference graph and hence attains this minimum. For $p\ge3$ and $n=2r$, we determine the maximum $p$-Seidel energy among all $r$-regular graphs of order $n$ and characterize the equality case. We also determine the maximum $p$-Seidel energy among $r$-regular graphs with $n=2r+2$ for $p\geq3$ and the maximum is attained uniquely by the disjoint union of two complete graphs of order $r+1$. Finally, we pose several open problems concerning the $p$-Seidel energy for different values of $p$.
\end{abstract}

\noindent{\bf Keywords.} Spectral graph theory; Seidel matrix; $p$-Seidel energy; Complete graph; Conference graph; Paley graph.

\noindent{\bf Mathematics Subject Classifications:} 05C50

\sloppy
\section{Introduction}
Let $G=(V,~E)$ be a simple graph with $n$ vertices ($n$ is called the order of $G$) and $m$ edges ($m$ is called the size of $G$) having vertex set $V=\{v_1,v_2,\ldots,v_n\}$ and edge set $E=\{e_1,e_2,\ldots,e_m\}$. The adjacency matrix $A(G)=[A_{ij}]_{n\times n}$ of $G$ is a $(0, 1)$-square matrix of order $n$ whose $ij$-entry is equal to 1 if $v_i$ is adjacent to $v_j$ and equal to 0 otherwise. The adjacency matrix of a graph is a real symmetric matrix. The \emph{degree} of a vertex $i\in V$ in a graph $G$ is the number of vertices adjacent to~$i$. The \emph{complement} of a graph $G$ is the graph on the same set of vertices in which two distinct vertices are adjacent if and only if they are not adjacent in $G$, and is denoted by $\overline{G}$. The \emph{complete graph} of order $n$ is the graph with $n$ vertices in which every pair of distinct vertices is adjacent, and is denoted by~$K_n$. A graph $G$ is said to be \emph{$r$-regular} if each vertex of $G$ has degree~$r$. A graph $G$ is \emph{complete bipartite} if $V=X\sqcup Y$ with $X,Y\neq\varnothing$ and $v_iv_j\in E$ for all $v_i\in X$, $v_j\in Y$, and no edges join two vertices within $X$ or within $Y$. When $|X|=|Y|$, the graph is called \emph{balanced}. If $|X|=a$ and $|Y|=b$, the graph is denoted by $K_{a,b}$. Throughout this article, all graphs are non-empty, undirected, simple, finite, and have at least one edge. Whenever connectedness is required, it will be stated explicitly.

Let $M$ be a Hermitian matrix of order $n$. Then $M$ has $n$ real eigenvalues, denoted by \(\alpha_1(M),\alpha_2(M),\ldots\), \(\alpha_n(M)\). The \emph{$p$-energy} of $M$, denoted by $\mathcal{E}_p(M)$, is defined to be the sum of the $p$th powers of the absolute values of the eigenvalues of $M$ i.e.,

\begin{center}
$\mathcal{E}_p(M)=\sum\limits_{i=1}^n|\alpha_i|^p,~~p>0$.\end{center}
The well-known concept of the \emph{energy} of a graph $G$, denoted by $\mathcal{E}(G)$, is $\mathcal{E}_1(A)$, where $A=A(G)$ is the adjacency matrix of $G$. In 2025, Akbari, Kumar, Mohar, and Pragada studied the positive and negative square energies of a graph \cite{S2025}. The {\em Seidel matrix} $S(G)=[S(G)_{ij}]_{n\times n}$ of a graph $G=(V,~E)$ is a matrix of order $n$ defined by: 
\[
S(G)_{ij} =
\begin{cases}
0, & v_i=v_j,\\
-1, & v_i\neq v_j \text{ and }v_iv_j \in E,\\
+1, & v_i\neq v_j \text{ and }v_iv_j \notin E.
\end{cases}
\]
In 1966, van Lint and Seidel introduced the concept of the Seidel matrix for graphs and studied its connection with equiangular lines in~\cite{S1996}. The Seidel matrix can also be written as $S(G)=J-I-2A(G)$, where $A(G)$, $J$, and $I$ are the adjacency matrix of $G$, the matrix with all entries $1$, and the identity matrix, respectively. For more details, see \cite{S2020, WH2012, WH2010}.

Let $\lambda_1,\lambda_2,\ldots,\lambda_n$ be the Seidel eigenvalues of $G$. The notation $\mathbf{1}$ is the vector $(1,1,\ldots,1)^\top\in\mathbb{R}^n$. The notations $\operatorname{tr}(M)$ and $\operatorname{Spec}(M)$ are the trace and collection of eigenvalues of the matrix $M$, respectively. Suppose that $G=(V,~E)$ has $q$ distinct Seidel eigenvalues. Then the Seidel spectrum of $G$ can be written as
$$\operatorname{Spec}(S(G))=
\big\{
\underbrace{\lambda_1, \ldots, \lambda_1}_{m_1~\text{times}}, 
\underbrace{\lambda_2, \ldots, \lambda_2}_{m_2~\text{times}}, 
\ldots,
\underbrace{\lambda_q, \ldots, \lambda_q}_{m_q~\text{times}} 
\big\},
$$
where $m_i$ is the algebraic multiplicity of $\lambda_i$, for $1\leq i\leq q$ with $\sum\limits_{i=1}^q m_i=n$.

In 2020, Akbari, Einollahzadeh, Karkhaneei, and Nematollahi defined the \emph{$p$-Seidel energy} of $G$ by
\begin{center}
$\mathcal{E}_p(S(G))=\sum\limits_{i=1}^n|\lambda_i|^p,~~p>0$,    
\end{center}
where $|\lambda_i|$ denotes the absolute value of $\lambda_i$ \cite[Page 2]{S2020}. For $p=1$, the $1$-Seidel energy is the usual Seidel energy of \(G\), denoted by $\mathcal{E}(S(G))$.

\begin{definition}
Let $G=(V,E)$ be a graph, and let \(V=V_1\sqcup V_2
\) be a partition of its vertex set. The \emph{Seidel switching} of $G$ with respect to $V_1$ (equivalently, with respect to the partition
$(V_1,V_2)$) is the graph $G'$ obtained from $G$ as follows: the adjacency relation between $V_1$ and $V_2$ is reversed, while the adjacency relations within $V_1$ and within $V_2$ remain unchanged. More precisely, for $u\in V_1$ and $v\in V_2$,
\[
uv\in E(G')
\quad\Longleftrightarrow\quad
uv\notin E(G),
\]
whereas for two vertices belonging to the same part,
\[
uv\in E(G')
\quad\Longleftrightarrow\quad
uv\in E(G).
\]
\end{definition}
Let \(D=\operatorname{diag}(d_1,\ldots,d_n)\), where \(d_i=
\begin{cases}
1, & v_i\in V_1,\\
-1, & v_i\in V_2.
\end{cases}\)
If $G'$ is obtained from $G$ by the above Seidel switching, then
\begin{equation}\label{eq:Seidel-switching}
S(G')=DS(G)D.
\end{equation}
Indeed, if two vertices belong to the same part, then $d_i d_j=1$, so the corresponding entry of the Seidel matrix is unchanged. If they belong to different parts, then $d_i d_j=-1$, so the corresponding Seidel entry changes sign, which is exactly the effect of reversing adjacency between the two parts.

Since $D^T=D$ and $D^2=I$, the matrices $S(G)$ and $S(G')$ are orthogonally similar. Therefore,
\[
\operatorname{Spec}(S(G'))
=
\operatorname{Spec}(S(G)).
\]
Consequently, Seidel switching preserves the $p$-Seidel energy for every $p>0$:
\[
\mathcal{E}_p(S(G'))
=
\mathcal{E}_p(S(G)).
\]
Two graphs are called \emph{$S$-equivalent} if one can be obtained from the other by a Seidel switching. They are called \emph{$SC$-equivalent} if one graph is $S$-equivalent to either the other graph or its complement \cite{WH2012}.

In this article, we study extremal problems for the \(p\)-Seidel energy over graphs and regular graphs of fixed order. The paper is organized as follows.

In Section~\ref{secc2}, for $p>2$, we establish a universal lower bound for the $p$-Seidel energy of graphs of order $n$ and characterize its equality
case in terms of symmetric conference Seidel matrices. Consequently, whenever a symmetric conference matrix of order $n$ exists, this bound determines the exact minimum $p$-Seidel energy and all graphs attaining it. For $0<p<2$, we obtain the corresponding upper bound and equality
characterization. Hence, whenever a symmetric conference matrix of order $n$ exists, the exact maximum $p$-Seidel energy is determined. When no such matrix exists, the corresponding inequality is strict. In Section~\ref{secc3}, for $p>2$, $n\equiv1\pmod4$, and $r=\frac{n-1}{2}$, we establish a lower bound for the $p$-Seidel energy among all $r$-regular graphs of order $n$ and characterize its equality
case. Consequently, whenever a conference graph of order $n$ exists, this bound determines the minimum $p$-Seidel energy and all graphs attaining it. In particular, when $n$ is a prime power with $n\equiv1\pmod4$, the Paley graph $P(n)$ is a conference graph and hence attains this minimum. In Section~\ref{secc4}, we characterize the graph that achieves the maximum $p$-Seidel energy among all $r$-regular graphs with fixed order $n=2r$, where $p\geq3$. We also determine the maximum $p$-Seidel energy among $r$-regular
graphs of order $2r+2$ for $p\geq3$. In this case, the maximum is attained uniquely by the disjoint union of two complete graphs of order $r+1$. In Section~\ref{seccc5}, we state open problems concerning the $p$-Seidel energy for different values of $p$.

\section{A lower bound for the $p$-Seidel energy, $p>2$}\label{secc2}

Haemers studied the concept of Seidel energy of a graph, and he proposed the conjecture, which is later became known as {\em Haemers' Conjecture}, i.e., the complete graph has minimum Seidel energy among all graphs with $n$ vertices \cite{WH2012}. Akbari, Einollahzadeh, Karkhaneei, and Nematollahi proved the conjecture and showed that equality is characterized up to $SC$-equivalence \cite{S2020}. Also in 2024, Einollahzadeh and Nematollahi gave a short proof of {\em Haemers' Conjecture} on the Seidel energy of graphs \cite{ME2024}. In particular, the complete graph is not the unique equality graph up to isomorphism.

\begin{theorem}\rm\cite[Theorem 2]{S2020}\label{Th.2.1}
Let $G$ be a graph with $n$ vertices. Then
\begin{center}
$\mathcal{E}(S(G))\geq\mathcal{E}(K_n),$    
\end{center}
and equality holds if and only if $G$ is $SC$-equivalent to $K_n$.
\end{theorem}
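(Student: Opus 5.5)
We follow the determinant-plus-convexity strategy behind the short proof of Einollahzadeh and Nematollahi. Write $S=S(G)$ with eigenvalues $\lambda_1,\dots,\lambda_n$. Since $S(K_n)=I-J$ has spectrum $\{1^{(n-1)},\,1-n\}$, we have $\mathcal{E}(K_n)=2(n-1)$, and this is the target value. The two pieces of information about $S$ we will use are
\[
\sum_{i=1}^n\lambda_i^2=\operatorname{tr}(S^2)=n(n-1)
\qquad\text{and}\qquad
\prod_{i=1}^n|\lambda_i|=|\det S|\ge n-1 .
\]

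\textbf{Step 1: the determinant bound.} I would prove the congruence $\det S\equiv (-1)^{n-1}(1-n)\pmod{2^{n-1}}$ (the Greaves--Haemers type congruence). Write $S=J-I-2A$ and expand $\det(J-I-2A)$. Since $J$ has rank one, $\det(-I-2A+J)=\det(-I-2A)\,\bigl(1+\mathbf{1}^\top(-I-2A)^{-1}\mathbf{1}\bigr)$. Working modulo powers of $2$, every term involving $k$ factors of $2A$ contributes a multiple of $2^k$. A cleaner route is to show that switching and edge flips change $\det S$ only by multiples of $2^{n-1}$, and then compare with $\det(I-J)=(-1)^{n-1}(1-n)$. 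For $n\ge 2$ we have $0<|1-n|<2^{n-1}$, so $\det S\neq 0$ and in fact $|\det S|\ge n-1$. I expect this congruence to be the main obstacle: one must control the rank-one perturbation carefully so that each flip of an entry pair changes $\det S$ by a multiple of $2^{n-1}$. I would first try to prove it via the Leibniz expansion, pairing permutations with their inverses so that the terms with odd-size cycle structure combine in powers of $2$.

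\textbf{Step 2: the extremal problem.} Put $x_i=\lambda_i^2>0$. We must minimize $\sum_i\sqrt{x_i}$ subject to
\[
\sum_i x_i=n(n-1),\qquad \sum_i\log x_i\ge 2\log(n-1).
\]
Make the substitution $x_i=e^{t_i}$. The objective $\sum_i e^{t_i/2}$ and the constraint $\sum_i e^{t_i}$ are Schur-convex in $t$, while $\sum_i t_i$ is linear. A Lagrange/Karamata argument then shows that at a minimizer the $t_i$ take at most two distinct values, with the smaller value repeated $n-1$ times: the function $s\mapsto e^{s/2}$ is concave relative to $e^{s}$, so spreading the values helps. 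This reduces the problem to a one-variable optimization over $x_1=\dots=x_{n-1}=a$ and $x_n=b$, with
\[
(n-1)a+b=n(n-1),\qquad a^{n-1}b\ge (n-1)^2 .
\]
Minimizing $(n-1)\sqrt a+\sqrt b$ over this set, the product constraint binds at $a=1$, $b=(n-1)^2$. A direct calculus check then gives $\sum_i|\lambda_i|\ge 2(n-1)$.

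\textbf{Step 3: equality.} Equality forces $|\lambda_i|=1$ for $n-1$ indices and $|\lambda_n|=n-1$, together with $|\det S|=n-1$. Since $\operatorname{tr}S=0$, the signs are determined: the spectrum is $\{1^{(n-1)},1-n\}$ or its negative. In the first case $S+I$ is positive semidefinite of rank one with unit diagonal and $\pm1$ entries, so $S+I=vv^\top$ with $v\in\{\pm1\}^n$. Hence $S=D(J-I)D$ with $D=\operatorname{diag}(v)$ and $(J-I)=-S(K_n)$, which means $G$ is switching-equivalent to the complement of $K_n$. The other sign gives $G$ switching-equivalent to $K_n$. In either case $G$ is $SC$-equivalent to $K_n$. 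Conversely, by~\eqref{eq:Seidel-switching}, and because complementation negates $S$, every graph $SC$-equivalent to $K_n$ attains $2(n-1)$.
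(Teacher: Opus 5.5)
\textbf{There is a genuine gap: Step~1 is false, so Step~2 never gets its input.} The paper only quotes this theorem from the literature and gives no proof of it, so your argument has to stand on its own.

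The bound $|\det S|\ge n-1$ does not hold for all graphs, and for odd $n$ the Seidel matrix can even be singular. Every row of $S(C_5)$ has two entries $-1$ and two entries $+1$, so $S(C_5)\mathbf 1=0$ and $\det S(C_5)=0$. The paper's remark on conference graphs gives the same for every conference graph.

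The congruence you plan to prove is therefore false, and not only because of the sign slip ($\det(I-J)=1-n$).
\begin{itemize}
\item For $n=5$, $C_5$ and $K_5$ would have to share a residue, i.e.\ $0\equiv\det S(K_5)=-4\pmod{16}$, which is false.
\item It also fails for even $n$. For $n=6$ the Seidel matrix of $C_5\cup K_1$ is a symmetric conference matrix. Its eigenvalues are $\pm\sqrt5$, each with multiplicity $3$, so its determinant is $-125$. Since $\det S(K_6)=-5$ and $120$ is divisible by neither $2^5$ nor $2^4$, the congruence fails.
\item For even $n$ one does have $\det S\equiv 1-n\pmod 8$. But that class contains $9-n$, and $|9-n|<n-1$ once $n\ge 6$.
\end{itemize}
So no careful Leibniz expansion can deliver Step~1. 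Step~2 is a correct conditional statement, but it needs exactly $\prod_i|\lambda_i|\ge n-1$ and $\lambda_i^2>0$, and these are not available.

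This is not a technicality. Letting $p\to0^+$ in $\sum_i|\lambda_i|^p$ shows that having $\mathcal{E}_p(S)\ge(n-1)^p+n-1$ for all small $p>0$ forces $|\det S|\ge n-1$. Indeed $\mathcal{E}_p(S(C_5))=4\cdot5^{p/2}<4^p+4=\mathcal{E}_p(S(K_5))$ for small $p$, even though $\mathcal{E}(S(C_5))=4\sqrt5>8$. This is why the general-$p$ bound quoted as Theorem~\ref{Th.2.2} has $n-2$ rather than $n-1$. It is also why the case $p=1$ needs an ingredient that does not pass through $\prod_i|\lambda_i|\ge n-1$. At the very least you need a separate argument covering Seidel matrices with $|\det S|<n-1$, which certainly occur for odd $n$.

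A smaller slip in Step~3: for the spectrum $\{1^{(n-1)},1-n\}$ it is $I-S$, not $S+I$, that is positive semidefinite of rank one. This gives $S=D(I-J)D$, i.e.\ $G$ is switching equivalent to $K_n$, and the opposite sign gives $\overline{K_n}$. The conclusion of SC-equivalence is unaffected.
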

Also, in \cite{S2020}, the following strict lower bound of $p$-Seidel energy of graphs for any $p\in(0,2)$ is established.

\begin{theorem}\rm\cite[Theorem 1]{S2020}\label{Th.2.2}
Let $G$ be a graph with $n$ vertices. Then, for every real number $p\in(0,2)$
\begin{center}
$\mathcal{E}_p(S(G))>(n-1)^p+(n-2).$    
\end{center}
\end{theorem}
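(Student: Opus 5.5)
The plan is to turn the statement into an optimization problem over the spectrum, using the arithmetic constraints that a Seidel matrix satisfies. Write $x_i=\lambda_i^2\ge 0$. Since $S(G)$ has zero diagonal and off-diagonal entries $\pm1$, we have $\operatorname{tr}S(G)^2=n(n-1)$, that is, $\sum_i x_i=n(n-1)$. The quantity to bound is then $\sum_i x_i^{p/2}$, where $t\mapsto t^{p/2}$ is strictly concave for $0<p<2$. The trace constraint alone is useless: concavity pushes the minimum to an extreme point such as $x=(n(n-1),0,\dots,0)$, whose value $(n(n-1))^{p/2}$ lies below the claimed bound. So a second, multiplicative constraint is needed to keep the $x_i$ away from $0$.

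That constraint will come from integrality modulo $2$. Since $S(G)\equiv J-I \pmod 2$ entrywise, we have $\det S(G)\equiv \det(J-I)=(-1)^{n-1}(n-1)\pmod 2$, and more precisely I would use the classical congruence $\det S(G)\equiv (-1)^{n-1}(n-1)\pmod{2^{n-1}}$. This gives $|\det S(G)|\ge n-1$ whenever $\det S(G)\neq0$. When $n$ is odd the determinant may vanish. In that case the $\mathbb F_2$-rank of $J-I$, which is $n-1$, bounds the real rank from below, so at most one eigenvalue is $0$. I would then apply the argument to the remaining $n-1$ eigenvalues, using a suitable principal minor or coefficient of the characteristic polynomial (again controlled mod $2$) to get a lower bound on the product of the nonzero $x_i$. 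I expect the slack in the final bound, $n-2$ rather than $n-1$, to absorb this degenerate case.

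Next I would solve the resulting extremal problem: minimize $\sum_i x_i^{p/2}$ subject to $\sum_i x_i=n(n-1)$ and $\prod_i x_i\ge (n-1)^2$, with all $x_i>0$. Passing to $y_i=\log x_i$, the objective is $\sum_i e^{py_i/2}$, and the two constraints are $\sum_i e^{y_i}=n(n-1)$ and $\sum_i y_i\ge 2\log(n-1)$. A Lagrange-multiplier argument at an interior critical point gives $\tfrac p2 e^{py_i/2}=\mu e^{y_i}+\nu$. This equation has at most two solutions in $y$, so the optimal vector takes at most two distinct values. Together with a concavity/majorization argument (spreading values apart lowers a concave sum), I expect the minimum to be attained at $x=((n-1)^2,1,\dots,1)$, which realizes both constraints with equality and gives $(n-1)^p+(n-1)$. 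That is strictly larger than $(n-1)^p+(n-2)$, so strictness follows. In the degenerate case, with one $x_i=0$, the same computation on $n-1$ variables should give at least $(n-1)^p+(n-2)$, and a short check shows this cannot be attained.

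The main obstacle I anticipate is the second paragraph: establishing a usable lower bound on the product of the nonzero eigenvalues (the determinant congruence mod $2^{n-1}$ and the odd-order singular case). The two-value reduction in the optimization is standard by comparison. If the full congruence proves awkward, I would first try the weaker statement that $\det S(G)$ is odd when $n$ is even, and handle odd $n$ through the eigenvalues of an $(n-1)\times(n-1)$ principal submatrix combined with Cauchy interlacing.
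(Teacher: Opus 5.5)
The paper does not prove this statement; it is quoted from Akbari, Einollahzadeh, Karkhaneei and Nematollahi. So your argument has to stand on its own, and its key input is false.

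The congruence $\det S(G)\equiv(-1)^{n-1}(n-1)\pmod{2^{n-1}}$ cannot hold. Since $0<n-1<2^{n-1}$, it would force $\det S(G)\neq0$ for every graph, which contradicts the singular case you mention yourself. For $C_5=P(5)$ one has $S\mathbf 1=0$, while $4\not\equiv0\pmod{16}$. For even order, the $6\times6$ symmetric conference matrix has determinant $-125\not\equiv-5\pmod{32}$. In general only $\det S(G)\equiv(-1)^{n-1}(n-1)\pmod 4$ holds. For nonsingular $S(G)$ this gives a lower bound of $1$, $2$ or $4$ on $|\det S(G)|$, depending on $n \bmod 4$, not $n-1$.

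Worse, the consequence you need, $|\det S(G)|\ge n-1$ for nonsingular $S(G)$, is itself false. Take the tree on $\{1,\dots,7\}$ with edges $12,23,34,47,57,67$; a direct computation gives $\det S=-2$. So the constraint $\prod_i x_i\ge(n-1)^2$ is unavailable. The same problem affects your singular case: mod-$2$ information only says that the product of the nonzero eigenvalues is an odd integer. Nothing supports the hope that the slack ($n-2$ instead of $n-1$) absorbs this.

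This gap carries real weight. With the product bound the congruence actually gives, your extremal problem (which uses only $\sum_i x_i$ and $\prod_i x_i$) does not yield the theorem. Take $n=100$, $p=\tfrac12$, $y=0.912$, $x_1=9900-99y$ and $x_2=\dots=x_{100}=y$. Then $\sum_i x_i=n(n-1)$ and $\prod_i x_i>1$, yet $\sum_i x_i^{p/2}\approx106.7<\sqrt{99}+98\approx107.95$. So you need a genuinely different integrality input, or more of the structure of $S(G)$ (for example $\operatorname{tr}S(G)=0$, which you discarded).

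The mod-$2^{n-1}$ fact you may be recalling concerns $\det(S\pm I)$, not $\det S$. Since $S+I=J-2A$ and $S-I=J-2(I+A)$ are rank-one perturbations of twice an integer matrix, both determinants are divisible by $2^{n-1}$. This constrains $\prod_i|\lambda_i\pm1|$ rather than $\prod_i|\lambda_i|$. An argument along your lines would have to be rebuilt around information of that kind.
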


\begin{remark}\label{Remarkkk}
Since $S(G)$ is a real symmetric matrix, its squared Frobenius norm is
\[
\|S(G)\|_F^2
=
\operatorname{tr}\bigl(S(G)^\top S(G)\bigr)
=
\operatorname{tr}\bigl(S(G)^2\bigr).
\]
The diagonal entries of $S(G)$ are zero and all its off-diagonal entries are equal to $\pm1$. Therefore,
\[
\|S(G)\|_F^2
=
\sum_{i,j=1}^n |S(G)_{ij}|^2
=
n(n-1).
\]
On the other hand, if $\lambda_1,\ldots,\lambda_n$ are the Seidel eigenvalues of $G$, then
\[
\operatorname{tr}\bigl(S(G)^2\bigr)
=
\sum_{i=1}^n\lambda_i^2.
\]
Consequently,
\begin{equation}\label{ERT}
\sum_{i=1}^n\lambda_i^2
=
\|S(G)\|_F^2
=
n(n-1).
\end{equation}
Thus, the Euclidean norm of the Seidel spectrum is fixed for every graph of order $n$. In particular,
\[
\mathcal E_2(S(G))
=
\sum_{i=1}^n|\lambda_i|^2
=
n(n-1),
\]
so the $2$-Seidel energy is independent of the structure of $G$. Therefore, the extremal $p$-Seidel energy problem is nontrivial only for $0<p<2$ or $p>2$.
\end{remark}
The discussion in Remark \ref{Remarkkk} leads to the following question: 

\begin{question}\label{qe.3.1}
What is the minimum $p$-Seidel energy among all graphs of order $n$, for $p>2$, and which graphs attain it?
\end{question}
To answer Question \ref{qe.3.1}, we first recall the definition of a conference matrix.

\begin{definition}\rm\cite[Page 658]{WH2012}\label{D.2.5}
A \emph{conference matrix} is a square matrix $S$ of order $n$ with zero diagonal and $\pm1$ off-diagonal entries such that
\[
S S^{\top} = (n-1)I.
\]
We say that a graph $G$ has a \emph{symmetric conference Seidel matrix} if its Seidel matrix $S(G)$ is a symmetric conference matrix.
\end{definition}

\begin{remark}
Conference matrices were introduced by Belevitch in connection with ideal telephone conference networks \cite{Belevitch1950}. Throughout this article, whenever a symmetric conference Seidel matrix of order $n$ is considered, we assume that $n$ is an order for which such a matrix exists.  The subsequent combinatorial formulation linked these matrices with conference graphs \cite{VanLintWilson2001}.
\end{remark}
The following lemma is known as the classical power mean inequality (also called the generalized mean inequality).

\begin{lemma}\rm\cite[Theorem 1]{PSB2013}\label{lemma1}
Let $a_1, a_2, \ldots, a_n$ be nonnegative real numbers and let $r>s>0$. Then
\[
\left( \frac{1}{n} \sum_{i=1}^{n} a_i^{r} \right)^{\frac{1}{r}}
\ge 
\left( \frac{1}{n} \sum_{i=1}^{n} a_i^{s} \right)^{\frac{1}{s}},
\]
with equality if and only if $a_1 = a_2 = \cdots = a_n$.
\end{lemma}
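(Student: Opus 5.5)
The plan is to reduce the inequality to Jensen's inequality for a single strictly convex power function, by the substitution $b_i=a_i^{s}\ge 0$. Put $t=r/s$. Since $r>s>0$, we have $t>1$, and $a_i^{r}=(a_i^{s})^{t}=b_i^{t}$. In these variables, the claimed inequality, after raising both sides to the positive power $r$, reads
\[
\frac{1}{n}\sum_{i=1}^n b_i^{t}\;\ge\;\Bigl(\frac{1}{n}\sum_{i=1}^n b_i\Bigr)^{t}.
\]
The map $x\mapsto x^{r}$ is strictly increasing on $[0,\infty)$, so raising to the power $r$, and later taking $r$-th roots, preserves both the inequality and the equality case. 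Here I use
\[
\Bigl(\bigl(\tfrac1n\textstyle\sum b_i\bigr)^{1/s}\Bigr)^{r}=\bigl(\tfrac1n\textstyle\sum b_i\bigr)^{t}.
\]

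The key step is to note that $\varphi(x)=x^{t}$ with $t>1$ is strictly convex on $[0,\infty)$. On $(0,\infty)$ this follows from $\varphi''(x)=t(t-1)x^{t-2}>0$, and it extends to the endpoint $0$ by continuity. Jensen's inequality with equal weights $1/n$ then gives
\[
\varphi\Bigl(\tfrac1n\sum b_i\Bigr)\le \tfrac1n\sum\varphi(b_i),
\]
which is exactly the displayed inequality. Taking $r$-th roots returns the statement of Lemma~\ref{lemma1}.

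For the equality case, I would use the strict form of Jensen's inequality: for a strictly convex $\varphi$, equality holds if and only if all the $b_i$ coincide. To be self-contained, I would derive this from the supporting line at the mean $\mu=\frac1n\sum b_i$,
\[
\varphi(x)\ge\varphi(\mu)+\varphi'(\mu)(x-\mu),
\]
where strict convexity makes the inequality strict for $x\ne\mu$. Summing over $x=b_i$, the linear terms cancel, so equality forces $b_i=\mu$ for all $i$.

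The only point needing care is the case $\mu=0$, where $\varphi'(0)=0$ is still fine because $t>1$. In that case all $b_i=0$ anyway. Finally, $b_i=b_j$ is equivalent to $a_i=a_j$, because $x\mapsto x^{s}$ is injective on $[0,\infty)$ for $s>0$. The converse direction of the equality case is immediate.

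I expect no real obstacle. The only delicate bookkeeping is tracking the monotone transformations by positive powers, so that the direction of the inequality and the equality characterization transfer correctly, and handling zero entries at the boundary of the domain.
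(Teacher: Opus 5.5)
Your proof is correct and complete. The paper gives no argument of its own for this lemma; it only quotes the classical power mean inequality from the literature. So there is no competing route to compare with. Your argument is the standard proof that the citation stands in for: set $b_i=a_i^{s}$ and $t=r/s>1$, raise both sides to the $r$-th power, and apply the tangent-line form of Jensen's inequality to the strictly convex map $x\mapsto x^{t}$.

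What your version adds is a self-contained proof of the equality clause, which is the part the paper actually relies on. In Theorem~\ref{mm1} and Lemma~\ref{lem:PM} (with $(r,s)=(p,2)$), and in the remark for $0<p<2$ (with $(r,s)=(2,p)$), the condition $|\lambda_1|=\cdots=|\lambda_n|$ comes from your condition $b_1=\cdots=b_n$.

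One phrase needs an extra line: strict convexity ``extends to $0$ by continuity.'' Continuity by itself transfers convexity to the endpoint, not strictness. The only case you need is the strict tangent inequality at $x=0$ when $\mu>0$. That inequality is $0>\mu^{t}-t\mu^{t}=(1-t)\mu^{t}$, which holds immediately because $t>1$.
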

In the following theorem, we establish a universal lower bound for the \(p\)-Seidel energy and characterize its equality case. Consequently, for every order \(n\) admitting a symmetric conference matrix, the theorem determines the minimum \(p\)-Seidel energy, which also answers Question \ref{qe.3.1} for such $n$.

\begin{theorem}\label{mm1}
Let $G$ be a graph on $n \ge 2$ vertices. Then, for every $p>2$,
\[
\mathcal{E}_p(S(G)) \;\ge\; n\,(n-1)^{\frac{p}{2}},
\]
with equality if and only if \(S(G)^2=(n-1)I\), that is, if and only if $S(G)$ is a symmetric conference matrix.
\end{theorem}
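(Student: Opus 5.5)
The plan is to combine the trace identity \eqref{ERT} from Remark~\ref{Remarkkk} with the power mean inequality, Lemma~\ref{lemma1}. Let $\lambda_1,\ldots,\lambda_n$ be the Seidel eigenvalues of $G$ and put $a_i=|\lambda_i|\ge 0$. By \eqref{ERT}, $\sum_{i=1}^n a_i^2=n(n-1)$, so the quadratic mean of the $a_i$ is exactly $\sqrt{n-1}$.

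Apply Lemma~\ref{lemma1} with $r=p>2$ and $s=2$. This gives
\[
\left(\frac{1}{n}\sum_{i=1}^n a_i^{p}\right)^{\frac1p}\ \ge\ \left(\frac{1}{n}\sum_{i=1}^n a_i^{2}\right)^{\frac12}=\sqrt{n-1}.
\]
Raising to the $p$th power and multiplying by $n$ yields $\mathcal{E}_p(S(G))=\sum_i a_i^p\ge n(n-1)^{p/2}$, which is the claimed bound.

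For the equality case, Lemma~\ref{lemma1} gives equality if and only if all $a_i$ are equal. Combined with $\sum_i a_i^2=n(n-1)$, this means $|\lambda_i|=\sqrt{n-1}$ for every $i$, that is, $\lambda_i^2=n-1$ for all $i$. Since $S(G)$ is real symmetric, it is orthogonally diagonalizable, $S(G)=Q\Lambda Q^\top$. Hence $S(G)^2=Q\Lambda^2Q^\top$, and this equals $(n-1)I$ exactly when every $\lambda_i^2=n-1$. Because $S(G)$ is symmetric, $S(G)S(G)^\top=S(G)^2$. Together with its zero diagonal and $\pm1$ off-diagonal entries, this means $S(G)^2=(n-1)I$ holds precisely when $S(G)$ is a symmetric conference matrix in the sense of Definition~\ref{D.2.5}. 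Conversely, if $S(G)^2=(n-1)I$, every eigenvalue satisfies $\lambda^2=n-1$, so all $a_i$ coincide and equality holds.

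There is no real obstacle here. The only points needing care are checking that Lemma~\ref{lemma1} applies with $s=2>0$ and $r=p>s$, and translating "all $|\lambda_i|$ equal" into the matrix identity $S(G)^2=(n-1)I$ via the spectral theorem.
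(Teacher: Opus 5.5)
Your proposal is correct and follows essentially the same argument as the paper. Both apply the power mean inequality (Lemma~\ref{lemma1}) with exponents $p$ and $2$ to $|\lambda_i|$, use the trace identity \eqref{ERT}, and convert the equality case ``all $|\lambda_i|=\sqrt{n-1}$'' into $S(G)^2=(n-1)I$ via orthogonal diagonalization.
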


\begin{proof} Let $\lambda_i$ be the Seidel eigenvalues of $G$, for $i=1,2,\ldots,n$.

Substituting $a_i=|\lambda_i|$ into Lemma \ref{lemma1} and using (\ref{ERT}) we obtain
\[
\mathcal{E}_p(S(G))
= \sum_{i=1}^n |\lambda_i|^p
= \sum_{i=1}^n a_i^p
\;\ge\;
n\left(\frac{1}{n}\sum_{i=1}^n a_i^2\right)^{\frac{p}{2}}
=
n\left(\frac{1}{n}\sum_{i=1}^n \lambda_i^2\right)^{\frac{p}{2}}
=
n\,(n-1)^{\frac{p}{2}},
\]
equality holds if and only if \(
|\lambda_1|=|\lambda_2|=\cdots=|\lambda_n|\).

Suppose that equality holds, and let \(|\lambda_1|=\cdots=|\lambda_n|=a\). 

By \eqref{ERT}, \(na^2
=
\sum_{i=1}^n \lambda_i^2
=
n(n-1)\), and hence \(a^2=n-1\). Therefore, \(\lambda_i^2=n-1\), \(i=1,\ldots,n\).

Since $S(G)$ is real symmetric, there exists an orthogonal matrix $Q$ such that
\[
S(G)=Q\Lambda Q^{\top},
\]
where \(\Lambda=\operatorname{diag}(\lambda_1,\ldots,\lambda_n)\). 

Consequently,
\[
S(G)^2
=
Q\Lambda^2Q^{\top}
=
Q\bigl((n-1)I\bigr)Q^{\top}
=
(n-1)I.
\]
Since $S(G)$ is symmetric, has zero diagonal, and has $\pm1$
off-diagonal entries, it follows that $S(G)$ is a symmetric conference
matrix.

Conversely, suppose that \(S(G)^2=(n-1)I\). If $\lambda_i$ is an eigenvalue of $S(G)$, then \(
\lambda_i^2=n-1\), and therefore
\[
|\lambda_i|=\sqrt{n-1}
\qquad\text{for all }i=1,\ldots,n.
\]
Hence
\[
\mathcal{E}_p(S(G))
=
\sum_{i=1}^n|\lambda_i|^p
=
n(n-1)^{\frac{p}{2}}.
\]
Thus equality holds if and only if $S(G)$ is a symmetric conference matrix.

This completes the proof.
\end{proof}

\begin{remark}\label{rem:conference-existence}
The lower bound in Theorem~\ref{mm1} is attained only for those orders $n$ for which a symmetric conference matrix of order $n$ exists. Indeed, if no such matrix exists, then the equality characterization in Theorem~\ref{mm1} implies that
\[
\mathcal{E}_p(S(G))
>
n(n-1)^{\frac{p}{2}}
\]
for every graph $G$ of order $n$.

There are well-known necessary arithmetic conditions for the existence of a symmetric conference matrix. In particular, for $n>2$, if a symmetric conference matrix of order $n$ exists, then
\[
n\equiv2\pmod4
\]
and
\[
n-1=a^2+b^2
\]
for some integers $a$ and $b$; see, for example,
\cite[Lemma~10.4.1 and Theorem~10.4.2, p.~158]{AEB2011}. Thus, if either $n\not\equiv2\pmod4$ or $n-1$ is not representable as a sum of two integer squares, then no symmetric conference matrix of order $n$ exists, and hence the inequality in Theorem~\ref{mm1} is strict.
\end{remark}

\begin{remark}
For $0<p<2$, let $\lambda_1,\ldots,\lambda_n$ be the Seidel
eigenvalues of a graph $G$ of order $n$. Applying Lemma~\ref{lemma1} to $a_i=|\lambda_i|$ and using \eqref{ERT}, we obtain
\[
\mathcal{E}_p(S(G))
= \sum_{i=1}^n |\lambda_i|^p
= \sum_{i=1}^n a_i^p
\;\le\;
n\left(\frac{1}{n}\sum_{i=1}^n a_i^2\right)^{\frac{p}{2}}
=
n\left(\frac{1}{n}\sum_{i=1}^n \lambda_i^2\right)^{\frac{p}{2}}
=
n\,(n-1)^{\frac{p}{2}},
\]
Equality holds if and only if \(|\lambda_1|=\cdots=|\lambda_n|=\sqrt{n-1}\). Since $S(G)$ is real symmetric, this is equivalent to \(S(G)^2=(n-1)I\), that is, $S(G)$ is a symmetric conference matrix.

Consequently, if a symmetric conference matrix of order $n$ exists, then
\[
\max_{|V(G)|=n}\mathcal{E}_p(S(G))
=
n(n-1)^{\frac{p}{2}},
\]
and equality holds precisely for those graphs whose Seidel matrices are symmetric conference matrices.

If no symmetric conference matrix of order $n$ exists, then
\[
\mathcal{E}_p(S(G))
<
n(n-1)^{\frac{p}{2}}
\]
for every graph $G$ of order $n$. In this case, the above inequality does not determine the exact maximum.
\end{remark}

\section{A lower bound for the $p$-Seidel energy of $r$-regular graphs, $p>2$}\label{secc3}

\begin{lemma}\label{lem:PM}
Let $G$ be a graph with $n$ vertices. Let $p>2$ and $\lambda_1,\lambda_2,\lambda_3,\dots,\lambda_n$ be the Seidel eigenvalues of $G$. Then
\[
\sum_{i=2}^{n}|\lambda_i|^p
\;\ge\;
(n-1)^{1-\frac p2}\!
\big(n(n-1)-\lambda_1^2\big)^{\frac p2}
\]
with equality if and only if
$|\lambda_2|=\cdots=|\lambda_n|$.
\end{lemma}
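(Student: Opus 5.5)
The plan is to apply the power mean inequality (Lemma~\ref{lemma1}) to the $n-1$ numbers $a_i=|\lambda_i|$, $i=2,\ldots,n$, with exponents $r=p$ and $s=2$. This is the same argument as in Theorem~\ref{mm1}, except that the first eigenvalue is removed. Since $p>2>0$, the lemma gives
\[
\left(\frac{1}{n-1}\sum_{i=2}^{n}|\lambda_i|^{p}\right)^{\frac1p}
\ge
\left(\frac{1}{n-1}\sum_{i=2}^{n}\lambda_i^{2}\right)^{\frac12}.
\]
Both sides are nonnegative, so raising them to the $p$th power preserves the inequality. Multiplying by $n-1$ then yields
\[
\sum_{i=2}^{n}|\lambda_i|^{p}\ge (n-1)^{1-\frac p2}\left(\sum_{i=2}^{n}\lambda_i^{2}\right)^{\frac p2}.
\]

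Next I would use the trace identity \eqref{ERT} from Remark~\ref{Remarkkk}. It states $\sum_{i=1}^n\lambda_i^2=n(n-1)$, so
\[
\sum_{i=2}^n\lambda_i^2=n(n-1)-\lambda_1^2\ge 0.
\]
Substituting this into the previous display gives exactly the claimed bound.

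For the equality case, the power-mean lemma gives equality exactly when $a_2=\cdots=a_n$, that is, when $|\lambda_2|=\cdots=|\lambda_n|$. Taking $p$th powers is a strictly monotone operation on nonnegative reals, and the substitution via \eqref{ERT} is an identity, so neither step changes when equality holds. Hence equality in the stated bound holds if and only if $|\lambda_2|=\cdots=|\lambda_n|$.

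There is no real obstacle. The only points to check are degenerate ones. If $n=1$ the sum is empty, so I would tacitly take $n\ge 2$, which is automatic since graphs are assumed to have an edge. The case where all of $|\lambda_2|,\ldots,|\lambda_n|$ are zero is consistent with the equality characterization: both sides are $0$ and the moduli are all equal.
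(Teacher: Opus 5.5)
Your proposal is correct and follows the paper's proof essentially verbatim: the power mean inequality (Lemma~\ref{lemma1}) is applied to $|\lambda_2|,\ldots,|\lambda_n|$ with exponents $p$ and $2$, the result is rearranged to $(n-1)^{1-\frac p2}\bigl(\sum_{i\ge2}\lambda_i^2\bigr)^{\frac p2}$, and $\sum_{i\ge2}\lambda_i^2$ is replaced by $n(n-1)-\lambda_1^2$ via \eqref{ERT}. The equality case is inherited directly from the power mean lemma, exactly as in the paper.
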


\begin{proof} Let $t=n-1$ and $z_i=|\lambda_{i+1}|$ for $i=1,\dots,t$. Then we have
\[
\sum_{i=2}^n |\lambda_i|^p=\sum_{i=1}^t z_i^p
\quad\text{and}\quad
\sum_{i=2}^n \lambda_i^2=\sum_{i=1}^t z_i^2.
\]
For $p>2$ and using Lemma \ref{lemma1} we have
\[
\left(\frac1t\sum_{i=1}^t z_i^p\right)^{\!\frac{1}{p}}
\;\ge\;
\left(\frac1t\sum_{i=1}^t z_i^2\right)^{\!\frac{1}{2}},
\]
equality holds if and only if all $z_{i}$ are equal for $i=1,\dots,t$.

\[
\hspace{1cm}\implies\sum_{i=1}^t z_i^p
\;\ge\;
t^{\,1-\frac p2}\left(\sum_{i=1}^t z_i^2\right)^{\!\frac p2},
\]
equality holds if and only if all $z_{i}$ are equal for $i=1,\dots,t$.

\[
\hspace{1cm}\implies\sum_{i=2}^{n}|\lambda_i|^p
\;\ge\;
(n-1)^{1-\frac p2}\!
\left(\sum_{i=2}^{n}\lambda_i^2\right)^{\frac p2},
\]
equality holds if and only if all $z_{i}=|\lambda_{i+1}|$ are equal for $i=1,\dots,(n-1)$.

Now using (\ref{ERT}) we have $\sum\limits_{i=2}^n \lambda_i^2=n(n-1)-\lambda_1^2$, which gives 
\[\sum_{i=2}^{n}|\lambda_i|^p
\;\ge\;
(n-1)^{1-\frac p2}\!
\big(n(n-1)-\lambda_1^2\big)^{\frac p2},
\]
equality holds if and only if all $|\lambda_{i+1}|$ are equal for $i=1,\dots,(n-1)$, i.e., when $|\lambda_2|=|\lambda_3|=\cdots=|\lambda_n|$.

This completes the proof.
\end{proof}

\begin{lemma}\rm\cite[Page 221]{AEB2011}\label{lem:t}
Let $G$ be an $r$-regular graph on $n$ vertices with adjacency eigenvalues \(\theta_1 = r,\theta_2,\dots,\theta_n\), where $\theta_1=r$ corresponds to the eigenvector $\mathbf{1}$. Then the Seidel eigenvalue corresponding to $\mathbf{1}$ is \(n-1-2r\), and the remaining Seidel eigenvalues are \(-1-2\theta_i\), \(i=2,\ldots,n\).
\end{lemma}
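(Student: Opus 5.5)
The statement is Lemma~\ref{lem:t}, the standard relation between the adjacency and Seidel spectra of a regular graph. The plan is to use the identity $S(G)=J-I-2A(G)$ from the introduction together with the fact that, for an $r$-regular graph, $A(G)$ commutes with $J$. This lets us diagonalize $A(G)$ and $J$ simultaneously.

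\textbf{Step 1.} Since $G$ is $r$-regular, every row sum of $A=A(G)$ equals $r$, so $A\mathbf{1}=r\mathbf{1}$. Thus $\mathbf{1}$ is an eigenvector for $\theta_1=r$. Then
\[
S(G)\mathbf{1}=J\mathbf{1}-\mathbf{1}-2A\mathbf{1}=(n-1-2r)\mathbf{1},
\]
which gives the first claimed Seidel eigenvalue.

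\textbf{Step 2.} Because $A$ is real symmetric, the spectral theorem gives an orthonormal eigenbasis of $A$. We may take its first vector to be $\mathbf{1}/\sqrt{n}$ and the rest $x_2,\dots,x_n$, with $Ax_i=\theta_i x_i$. To be careful when $r$ has multiplicity larger than one, first restrict $A$ to the $A$-invariant subspace $\mathbf{1}^\perp$. It is invariant because $A$ is symmetric and $\mathbf{1}$ is an eigenvector: if $x\perp\mathbf{1}$, then $\langle Ax,\mathbf{1}\rangle=\langle x,A\mathbf{1}\rangle=r\langle x,\mathbf{1}\rangle=0$. Diagonalizing $A$ on $\mathbf{1}^\perp$ produces $x_2,\dots,x_n$, and the eigenvalues obtained there are exactly $\theta_2,\dots,\theta_n$, namely the spectrum of $A$ with one copy of $r$ removed.

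\textbf{Step 3.} For $x_i\perp\mathbf{1}$ we have $Jx_i=\mathbf{1}\mathbf{1}^\top x_i=0$. Hence
\[
S(G)x_i=0-x_i-2\theta_i x_i=(-1-2\theta_i)x_i .
\]
Since $\{\mathbf{1}/\sqrt n,x_2,\dots,x_n\}$ is a basis, this accounts for the whole Seidel spectrum with multiplicities.

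The only delicate point is the bookkeeping in Step~2. It ensures that the eigenvalue $r$ attached to $\mathbf{1}$ is removed exactly once, even when $G$ is disconnected and $r$ is a repeated eigenvalue. The orthogonal-complement argument handles this cleanly.
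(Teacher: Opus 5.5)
Your argument is correct and complete. You write $S(G)=J-I-2A(G)$, use $A\mathbf{1}=r\mathbf{1}$, and use the $A$-invariance of $\mathbf{1}^\perp$, on which $J$ vanishes; this yields exactly the stated Seidel spectrum with multiplicities, even when $r$ is a repeated eigenvalue. The paper gives no proof of its own and simply cites Brouwer--Haemers, whose argument is this same standard one, so your proof is essentially the intended one.
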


\begin{theorem}\label{LB}
Let $G$ be an $r$-regular graph on $n$ vertices and $\lambda_1,\lambda_2,\dots,\lambda_n$ be the Seidel eigenvalues of $G$ such that $\lambda_1$ is the eigenvalue corresponding to $\mathbf{1}$. Then for every $p>2$,
\begin{center}
$\mathcal{E}_p(S(G))\;\ge\;|n-1-2r|^p+(n-1)^{1-\frac p2}
\!\big(n(n-1)-(n-1-2r)^2\big)^{\!\frac p2},$
\end{center}
with equality if and only if $|\lambda_2|=\cdots=|\lambda_n|$.
\end{theorem}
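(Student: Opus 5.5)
The plan is to split the $p$-Seidel energy into the contribution of the eigenvalue belonging to $\mathbf{1}$ and the contribution of the remaining $n-1$ eigenvalues. We treat the first term exactly using Lemma~\ref{lem:t}, and bound the second by Lemma~\ref{lem:PM}. Since $G$ is $r$-regular, $S(G)\mathbf{1}=(J-I-2A(G))\mathbf{1}=(n-1-2r)\mathbf{1}$. So $\mathbf{1}$ is a Seidel eigenvector, and by Lemma~\ref{lem:t} (or this direct computation) $\lambda_1=n-1-2r$. Hence
\[
\mathcal{E}_p(S(G))=|n-1-2r|^p+\sum_{i=2}^n|\lambda_i|^p .
\]

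Next, Lemma~\ref{lem:PM} holds for any labelling of the spectrum, since its proof only uses the power mean inequality on $|\lambda_2|,\dots,|\lambda_n|$ together with \eqref{ERT}. So we may apply it with $\lambda_1=n-1-2r$:
\[
\sum_{i=2}^n|\lambda_i|^p\ \ge\ (n-1)^{1-\frac p2}\bigl(n(n-1)-(n-1-2r)^2\bigr)^{\frac p2},
\]
with equality if and only if $|\lambda_2|=\cdots=|\lambda_n|$. The quantity $n(n-1)-(n-1-2r)^2=\sum_{i\ge2}\lambda_i^2$ is nonnegative by \eqref{ERT}, so the right-hand side is well defined. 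Adding $|n-1-2r|^p$ to both sides gives the stated inequality. Because the first term is the same on both sides, equality in the theorem holds exactly when equality holds in Lemma~\ref{lem:PM}, which is the stated condition.

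The only point needing care is the labelling: the theorem fixes $\lambda_1$ as the eigenvalue of $\mathbf{1}$, not as the largest one. If $n-1-2r$ is a repeated eigenvalue, we choose one copy whose eigenvector is $\mathbf{1}$, take $\lambda_2,\dots,\lambda_n$ to be the eigenvalues on $\mathbf{1}^\perp$, and note that the multiset $\{\lambda_2,\dots,\lambda_n\}$ is unaffected by this choice. With that settled, the argument is a direct combination of Lemma~\ref{lem:t}, Lemma~\ref{lem:PM} and \eqref{ERT}, and I do not expect a genuine obstacle.
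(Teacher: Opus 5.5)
Your proposal is correct and is exactly the paper's argument: Lemma~\ref{lem:t} gives $\lambda_1=n-1-2r$, and Lemma~\ref{lem:PM} applied to the remaining eigenvalues yields both the inequality and the equality condition. Your remarks that the multiset $\{\lambda_2,\ldots,\lambda_n\}$ does not depend on which copy of $n-1-2r$ is chosen, and that $n(n-1)-(n-1-2r)^2\ge 0$, are points the paper leaves implicit.
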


\begin{proof} 
By Lemma \ref{lem:t}, since $\lambda_1$ is the Seidel eigenvalue corresponding to $\mathbf{1}$, we have
\[
\lambda_1=n-1-2r.
\]
The result now follows directly from Lemma \ref{lem:PM}. 
\end{proof}

\begin{definition}\rm\cite[Page 115]{AEB2011}
A graph $G$ with $n$ vertices is called a \emph{strongly regular graph} with parameters $(n,k,\lambda,\mu)$, written as $G=\operatorname{srg}(n,k,\lambda,\mu)$, where $G$ is $k$-regular, any two adjacent vertices have exactly $\lambda$ common neighbors, and any two non-adjacent vertices have exactly $\mu$ common neighbors.
\end{definition}

\begin{definition}\rm\cite[Page 115]{AEB2011}
A strongly regular graph with parameters
\[
\left(
n,\frac{n-1}{2},
\frac{n-5}{4},
\frac{n-1}{4}
\right)
\]
is called a \emph{conference graph}. Equivalently, a conference graph
$G$ of order $n$ satisfies
\[
G=
\operatorname{srg}\!\left(
n,\frac{n-1}{2},
\frac{n-5}{4},
\frac{n-1}{4}
\right).
\]
\end{definition}

\begin{remark}
The notion of a conference graph above should not be confused with a graph whose Seidel matrix is itself a symmetric conference matrix. If $G$ is a conference graph of order $n$, then
\[
S(G)\mathbf 1=0
\]
and
\[
S(G)^2=nI-J.
\]
Thus $S(G)$ is singular and is not a conference matrix.

The two notions are nevertheless closely related. After a suitable switching, a symmetric conference matrix of order $n+1$ can be written
as
\[
C=
\begin{pmatrix}
0&\mathbf1^{\top}\\
\mathbf1&S(G)
\end{pmatrix},
\]
where $G$ is a conference graph of order $n$.
\end{remark}

\begin{definition}\rm\cite[Page 115]{AEB2011}
Let $n$ be a prime power with $n\equiv1\pmod 4$, and let $\mathbb{F}_n$ be the finite field of order $n$. The \emph{Paley graph} $P(n)$ is the graph with vertex set $\mathbb{F}_n$ in which two
distinct vertices $x$ and $y$ are adjacent if and only if $x-y$ is a nonzero square in $\mathbb{F}_n$.
\end{definition}

\begin{remark}\rm\cite[Proposition 9.1.1]{AEB2011}\label{Note}
The Paley graph $P(n)$ is a conference graph. The adjacency spectrum of a conference graph $G$ is
\[
\operatorname{Spec}(A(G)) =
\Bigg\{
\frac{n-1}{2},\
\underbrace{
\frac{-1+\sqrt{n}}{2},\dots,\frac{-1+\sqrt{n}}{2}
}_{\frac{n-1}{2}~\text{times}},\
\underbrace{
\frac{-1-\sqrt{n}}{2},\dots,\frac{-1-\sqrt{n}}{2}
}_{\frac{n-1}{2}~\text{times}}
\Bigg\}.
\]
\end{remark}
In the following theorem, for $p>2$, $n\equiv1\pmod4$, and $r=\frac{n-1}{2}$, we establish a lower bound for the $p$-Seidel energy among all $r$-regular graphs of order $n$ and characterize the equality case. Whenever a conference graph of order $n$ exists, this bound gives the exact minimum. In particular, if $n$ is a prime power, the Paley graph $P(n)$ attains this minimum.

\begin{theorem}\label{conference}
Let $n\geq5$ with $n\equiv1\pmod 4$, and let \(r=\frac{n-1}{2}\). Let $G$ be an $r$-regular graph on $n$ vertices. Then, for every $p>2$,
\[
\mathcal{E}_p(S(G))
\geq
(n-1)n^{\frac p2}.
\]
Equality holds if and only if $G$ is a conference graph. In particular, whenever $n$ is a prime power, the Paley graph $P(n)$ attains equality.
\end{theorem}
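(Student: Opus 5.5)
We apply Theorem~\ref{LB} with $r=\frac{n-1}{2}$. Then $n-1-2r=0$, so $\lambda_1=0$ is the Seidel eigenvalue belonging to $\mathbf 1$, and the bound of Theorem~\ref{LB} becomes
\[
\mathcal{E}_p(S(G))\;\ge\;0+(n-1)^{1-\frac p2}\bigl(n(n-1)\bigr)^{\frac p2}=(n-1)n^{\frac p2}.
\]
By Theorem~\ref{LB}, equality holds if and only if $|\lambda_2|=\cdots=|\lambda_n|$. Since $\lambda_1=0$, identity \eqref{ERT} gives $\sum_{i\ge2}\lambda_i^2=n(n-1)$. Equality therefore forces $|\lambda_i|=\sqrt n$ for every $i\ge2$, and conversely, if $|\lambda_i|=\sqrt n$ for all $i\ge 2$, a direct evaluation gives $\mathcal E_p(S(G))=(n-1)n^{p/2}$. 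So the problem reduces to showing that an $\frac{n-1}{2}$-regular graph on $n$ vertices has Seidel eigenvalues $\pm\sqrt n$ on $\mathbf 1^\perp$ exactly when it is a conference graph.

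For the sufficiency, we use Remark~\ref{Note} and Lemma~\ref{lem:t}. The adjacency eigenvalues $\frac{-1\pm\sqrt n}{2}$ on $\mathbf 1^\perp$ map to Seidel eigenvalues $-1-2\theta=\mp\sqrt n$, so every conference graph attains equality. The Paley statement then follows from Remark~\ref{Note}, since $P(n)$ is a conference graph.

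For the necessity, Lemma~\ref{lem:t} turns $|\lambda_i|=\sqrt n$ into adjacency eigenvalues $\theta_i=\frac{-1\pm\sqrt n}{2}$ for $i\ge2$. These satisfy $\theta^2+\theta-\frac{n-1}{4}=0$. Hence $A^2+A-\frac{n-1}{4}I$ annihilates $\mathbf 1^\perp$. Because $A$ is symmetric and commutes with $J$, and $A\mathbf 1=r\mathbf 1$, we get
\[
A^2+A-\tfrac{n-1}{4}I=cJ, \qquad c=\frac{r^2+r-\frac{n-1}{4}}{n}.
\]
With $r=\frac{n-1}{2}$ one computes $r^2+r-\frac{n-1}{4}=\frac{(n-1)n}{4}$, so $c=\frac{n-1}{4}$. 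Substituting $J=I+A+\bar A$ gives
\[
A^2=\tfrac{n-1}{2}I+\tfrac{n-5}{4}A+\tfrac{n-1}{4}\bar A,
\]
where $\bar A$ is the adjacency matrix of $\overline{G}$. Reading off the entries of $A^2$, adjacent vertices have $\frac{n-5}{4}$ common neighbours and non-adjacent vertices have $\frac{n-1}{4}$. Thus $G=\operatorname{srg}(n,\frac{n-1}{2},\frac{n-5}{4},\frac{n-1}{4})$, which is a conference graph.

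The only step needing care is this converse. Its two delicate points are extending the polynomial identity from $\mathbf 1^\perp$ to the whole space via the $cJ$ term, and verifying the constant $c$. Both reduce to the spectral decomposition of the symmetric matrix $A$ together with the fact that $\mathbf 1$ is an eigenvector.
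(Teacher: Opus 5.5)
Your proposal is correct and follows the paper's proof essentially step for step. Both arguments use the bound from Theorem~\ref{LB} with $\lambda_1=0$, reduce equality to $|\lambda_i|=\sqrt n$ on $\mathbf 1^\perp$, and read off the conference-graph parameters entrywise from the identity $A^2+A-\frac{n-1}{4}I=\frac{n-1}{4}J$. The only difference is harmless: you omit the paper's trace argument giving $m_+=m_-=\frac{n-1}{2}$, which is indeed unnecessary for necessity, since $x^2+x-\frac{n-1}{4}$ annihilates both nontrivial eigenvalues whatever their multiplicities.
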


\begin{proof}
Label the Seidel eigenvalues of $G$ as \(
\lambda_1,\lambda_2,\ldots,\lambda_n\) so that $\lambda_1$ is the eigenvalue corresponding to $\mathbf1$. Since $G$ is $r$-regular, Lemma~\ref{lem:t} gives
\[
\lambda_1=n-1-2r.
\]
Since $r=\frac{n-1}{2}$, we have \(\lambda_1=0\). Therefore, by Theorem~\ref{LB},
\begin{align}
\mathcal{E}_p(S(G))
&\geq
(n-1)^{1-\frac p2}
\bigl(n(n-1)\bigr)^{\frac p2}\notag\\
&=
(n-1)n^{\frac p2}.
\label{eq:LB-special}
\end{align}
Equality holds if and only if \(
|\lambda_2|
=
|\lambda_3|
=
\cdots
=
|\lambda_n|\).
Suppose first that equality holds. Then there exists $\alpha>0$ such that \(\lambda_i\in\{\alpha,-\alpha\}\), \(i=2,\ldots,n\).

Let $m_+$ and $m_-$ denote the multiplicities of $\alpha$ and $-\alpha$, respectively. Since \(
\operatorname{tr}(S(G))
=
\sum_{i=1}^n\lambda_i
=0\) and $\lambda_1=0$, we obtain
\[
\alpha(m_+-m_-)=0.
\]
Hence
\[
m_+=m_-=\frac{n-1}{2}.
\]
Furthermore, using \eqref{ERT},
\[
n(n-1)
=
\sum_{i=1}^n\lambda_i^2
=
(m_++m_-)\alpha^2
=
(n-1)\alpha^2.
\]
Thus \(\alpha^2=n\), and consequently
\[
\operatorname{Spec}(S(G))
=
\Bigg\{
0,\
\underbrace{
-\sqrt{n},\dots,-\sqrt{n}
}_{\frac{n-1}{2}~\text{times}},\
\underbrace{
\sqrt{n},\dots,\sqrt{n}
}_{\frac{n-1}{2}~\text{times}}
\Bigg\}.
\]
We now show that this equality condition implies that $G$ is a conference graph. 

Let \(r=\frac{n-1}{2}\) be the trivial adjacency eigenvalue of $G$. By Lemma~\ref{lem:t}, the remaining adjacency eigenvalues are obtained from
\[
\lambda=-1-2\theta.
\]
Hence they are
\[
\theta_+=\frac{-1+\sqrt n}{2},
\qquad
\theta_-=\frac{-1-\sqrt n}{2},
\]
each with multiplicity $\frac{n-1}{2}$.

Let $A=A(G)$. On the subspace orthogonal to $\mathbf1$, the two nontrivial eigenvalues satisfy
\[
x^2+x-\frac{n-1}{4}=0.
\]
Let \(M=A^2+A-\frac{n-1}{4}I\). As shown above, $M$ vanishes on $\mathbf1^\perp$, while \(M\mathbf1
=
\frac{n(n-1)}{4}\mathbf1\). 

Since \(\mathbb R^n
=
\operatorname{span}\{\mathbf1\}\oplus\mathbf1^\perp\) and the orthogonal projection onto $\operatorname{span}\{\mathbf1\}$ is $\frac1nJ$, it follows that
\[
M
=
\frac{n(n-1)}{4}\left(\frac1nJ\right)
=
\frac{n-1}{4}J.
\]
Hence
\[
A^2+A-\frac{n-1}{4}I
=
\frac{n-1}{4}J,
\]
or equivalently,
\begin{equation}\label{eq:conference-A2}
A^2
=
\frac{n-1}{4}I
-A
+\frac{n-1}{4}J.
\end{equation}
For distinct vertices $i$ and $j$, the entry $(A^2)_{ij}$ is the number of their common neighbors. If $i$ and $j$ are adjacent, then $A_{ij}=1$, and \eqref{eq:conference-A2} gives
\[
(A^2)_{ij}
=
-1+\frac{n-1}{4}
=
\frac{n-5}{4}.
\]
If $i$ and $j$ are nonadjacent, then $A_{ij}=0$, and hence
\[
(A^2)_{ij}
=
\frac{n-1}{4}.
\]
Thus
\[
G=
\operatorname{srg}\!\left(
n,\frac{n-1}{2},
\frac{n-5}{4},
\frac{n-1}{4}
\right),
\]
so $G$ is a conference graph.

Conversely, suppose that $G$ is a conference graph. Then
\[
G=
\operatorname{srg}\!\left(
n,\frac{n-1}{2},
\frac{n-5}{4},
\frac{n-1}{4}
\right).
\]
The adjacency spectrum of $G$ is
\[
\operatorname{Spec}(A(G)) =
\Bigg\{
\frac{n-1}{2},\
\underbrace{
\frac{-1+\sqrt{n}}{2},\dots,\frac{-1+\sqrt{n}}{2}
}_{\frac{n-1}{2}~\text{times}},\
\underbrace{
\frac{-1-\sqrt{n}}{2},\dots,\frac{-1-\sqrt{n}}{2}
}_{\frac{n-1}{2}~\text{times}}
\Bigg\}.
\]
By Lemma~\ref{lem:t}, its Seidel spectrum is therefore
\[
\operatorname{Spec}(S(G)) =
\Big\{
0,\ 
\underbrace{-\sqrt{n},\dots,-\sqrt{n}}_{\frac{n-1}{2}~\text{times}},\
\underbrace{\sqrt{n},\dots,\sqrt{n}}_{\frac{n-1}{2}~\text{times}}
\Big\}.
\]
Consequently,
\[
\begin{aligned}
\mathcal{E}_p(S(G))
&=
\frac{n-1}{2}|\sqrt n|^p
+
\frac{n-1}{2}|-\sqrt n|^p\\
&=
(n-1)n^{\frac p2}.
\end{aligned}
\]
Hence equality holds if and only if $G$ is a conference graph.

Finally, when $n$ is a prime power with $n\equiv1\pmod4$, the Paley graph $P(n)$ is a conference graph, and therefore it attains equality.
\end{proof}

\begin{remark}
The lower bound in Theorem~\ref{conference} is attained only for those orders $n$ for which a conference graph of order $n$ exists. Indeed,
if no such conference graph exists, then the equality characterization in Theorem~\ref{conference} implies that
\[
\mathcal{E}_p(S(G))
>
(n-1)n^{\frac{p}{2}}
\]
for every $\frac{n-1}{2}$-regular graph $G$ of order $n$.

Since there are only finitely many graphs of order $n$, the minimum $p$-Seidel energy over all $\frac{n-1}{2}$-regular graphs of order $n$
nevertheless exists; however, Theorem~\ref{conference} does not determine its exact value when no conference graph of order $n$ exists.

In particular, when $n$ is a prime power with
$n\equiv1\pmod4$, the Paley graph $P(n)$ is a conference graph. Hence, for such orders $n$, the lower bound is attained and
\[
\min \mathcal{E}_p(S(G))
=
(n-1)n^{\frac{p}{2}},
\]
where the minimum is taken over all $\frac{n-1}{2}$-regular graphs $G$ of order $n$.
\end{remark}

\section{Maximum $p$-Seidel energy of $r$-regular graphs, $p\geq3$}\label{secc4}
In mathematical optimization, the method of Lagrange multipliers is a strategy for finding the local extrema (i.e., maxima and minima) of a function subject to constraint equations (i.e., subject to the condition that one or more equations have to be satisfied exactly by the chosen values of the variables) \cite{LDH1989}. The following lemma summarizes the method of Lagrange multipliers.

\begin{lemma}\label{lem:Lagrange}\rm\cite[Theorem, Page 327]{DGL2008}
Let $F:\mathbb{R}^n\to\mathbb{R}$ and $g_1,\dots,g_k:\mathbb{R}^n\to\mathbb{R}$ be continuously differentiable functions, and consider the constrained optimization problem
\[
\text{maximize/minimize } F(x)
\quad\text{subject to}\quad
g_1(x)=0,\ \dots,\ g_k(x)=0.
\]
Assume that $x^\ast\in\mathbb{R}^n$ is a local extremum and that the gradients $\nabla g_1(x^\ast),\dots,\nabla g_k(x^\ast)$ are linearly independent. Then there exist real numbers $\alpha_1,\dots,\alpha_k$ such that
\begin{equation}\label{eq:Lagrange-system}
\nabla F(x^\ast)
=\alpha_1\,\nabla g_1(x^\ast)+\cdots+\alpha_k\,\nabla g_k(x^\ast),
\end{equation}
together with the constraints
\[
g_1(x^\ast)=\cdots=g_k(x^\ast)=0.
\]
Thus, any interior constrained extremum must satisfy the system \eqref{eq:Lagrange-system}.
\end{lemma}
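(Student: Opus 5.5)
The plan is to reduce the constrained problem to an unconstrained one via the implicit function theorem, and then read off the multipliers from the chain rule. Write $g=(g_1,\dots,g_k)^{\top}:\mathbb{R}^n\to\mathbb{R}^k$, so that $Dg(x^\ast)$ is the $k\times n$ Jacobian whose rows are $\nabla g_1(x^\ast)^{\top},\dots,\nabla g_k(x^\ast)^{\top}$. By the linear independence hypothesis this matrix has rank $k$. In particular $k\le n$.

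First I dispose of the degenerate case $k=n$. There the gradients $\nabla g_1(x^\ast),\dots,\nabla g_n(x^\ast)$ form a basis of $\mathbb{R}^n$, so $\nabla F(x^\ast)$ is trivially a linear combination of them, and \eqref{eq:Lagrange-system} holds.

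Assume now $k<n$. Since $Dg(x^\ast)$ has rank $k$, some $k$ of its columns are linearly independent. After permuting coordinates, which does not affect the statement, I may write $x=(u,w)$ with $u\in\mathbb{R}^{n-k}$ and $w\in\mathbb{R}^{k}$, and $x^\ast=(u^\ast,w^\ast)$, such that the $k\times k$ block $D_wg(x^\ast)$ is invertible. The implicit function theorem, which needs only that $g$ is $C^1$, then gives the following. There are neighbourhoods $U$ of $u^\ast$ and $W$ of $w^\ast$ and a $C^1$ map $h:U\to W$ with $h(u^\ast)=w^\ast$ such that, for $(u,w)\in U\times W$, we have $g(u,w)=0$ if and only if $w=h(u)$. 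Differentiating $g(u,h(u))=0$ gives
\[
Dh(u^\ast)=-\bigl(D_wg(x^\ast)\bigr)^{-1}D_ug(x^\ast).
\]

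Next I set $\varphi(u)=F(u,h(u))$ on $U$. The key step, and the one that needs care, is to check that $u^\ast$ is an unconstrained local extremum of $\varphi$. This uses the uniqueness part of the implicit function theorem: near $x^\ast$, the constraint set $\{g=0\}$ coincides with the graph of $h$. Hence every $u$ near $u^\ast$ yields a feasible point $(u,h(u))$ near $x^\ast$, and the local extremality of $x^\ast$ for $F$ on $\{g=0\}$ transfers to local extremality of $u^\ast$ for $\varphi$. Since $\varphi$ is differentiable, this forces $\nabla\varphi(u^\ast)=0$. By the chain rule this reads
\[
D_uF(x^\ast)+D_wF(x^\ast)\,Dh(u^\ast)=0 .
\]

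Finally I define the row vector
\[
\alpha^{\top}=(\alpha_1,\dots,\alpha_k):=D_wF(x^\ast)\bigl(D_wg(x^\ast)\bigr)^{-1}.
\]
Then $D_wF(x^\ast)=\alpha^{\top}D_wg(x^\ast)$ holds by construction. Substituting the formula for $Dh(u^\ast)$ into $\nabla\varphi(u^\ast)=0$ gives
\[
D_uF(x^\ast)=D_wF(x^\ast)\bigl(D_wg(x^\ast)\bigr)^{-1}D_ug(x^\ast)=\alpha^{\top}D_ug(x^\ast).
\]
Concatenating the $u$- and $w$-blocks gives $DF(x^\ast)=\alpha^{\top}Dg(x^\ast)$, which is exactly \eqref{eq:Lagrange-system} after transposing. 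The constraints $g_i(x^\ast)=0$ hold because $x^\ast$ is feasible.

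The only genuinely delicate point is the localization step, namely that nearby feasible points are exactly $(u,h(u))$. The rest is linear algebra.
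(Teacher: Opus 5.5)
Your proof is correct. It is the standard elimination-of-variables proof of the Lagrange multiplier rule, and every step checks:
\begin{itemize}
\item rank $k$ gives an invertible block $D_wg(x^\ast)$;
\item feasibility gives $g(x^\ast)=0$, so the implicit function theorem applies;
\item $\alpha^\top=D_wF(x^\ast)\bigl(D_wg(x^\ast)\bigr)^{-1}$ gives the $w$-block of $DF(x^\ast)=\alpha^\top Dg(x^\ast)$ by construction, and the $u$-block via $\nabla\varphi(u^\ast)=0$.
\end{itemize}

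One correction to your commentary. Transferring local extremality from $x^\ast$ to $u^\ast$ does not use the uniqueness half of the implicit function theorem. It only needs two facts: $(u,h(u))$ is feasible, since $g(u,h(u))=0$, and it lies close to $x^\ast$ when $u$ is close to $u^\ast$, by continuity of $h$. Uniqueness would matter only for the converse, that an extremum of $\varphi$ yields a constrained extremum of $F$. So that step is less delicate than you suggest.

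The paper gives no proof of this lemma; it quotes it from the cited optimization text. There the route is through the tangent space:
\begin{itemize}
\item at a regular point, every $y\in\ker Dg(x^\ast)$ is the velocity at $x^\ast$ of a differentiable curve lying in $\{g=0\}$ (itself an implicit-function-theorem fact);
\item hence $\nabla F(x^\ast)^\top y=0$ for all such $y$;
\item so $\nabla F(x^\ast)\in(\ker Dg(x^\ast))^\perp$, which is the row space of $Dg(x^\ast)$.
\end{itemize}
That version is coordinate-free and needs neither a choice of pivot columns nor a separate case $k=n$. Yours instead gives the multipliers by an explicit formula.

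Both approaches extend to the second-order condition of Lemma~\ref{lem:second-order}, which the paper uses in Lemma~\ref{L333}. In your setting, put $L=F-\alpha^\top g$. Then $\varphi(u)=L(u,h(u))$ and $\nabla L(x^\ast)=0$, so the $D^2h$ term drops out of $D^2\varphi(u^\ast)$. The condition $D^2\varphi(u^\ast)\preceq 0$ is then exactly the Hessian-of-the-Lagrangian inequality on $\ker Dg(x^\ast)$, which is spanned by the columns of $\binom{I}{Dh(u^\ast)}$.
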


\begin{lemma}\label{lem:second-order}\rm\cite[Section 11.5, p.~333]{DGL2008}
Let $F:\mathbb{R}^n\to\mathbb{R}$ and
$g_1,\dots,g_k:\mathbb{R}^n\to\mathbb{R}$ be twice continuously differentiable functions. Suppose that $x^\ast\in\mathbb{R}^n$ is a local maximum of $F$ subject to
\(g_1(x)=0,\dots,g_k(x)=0\), and that the gradients
\[
\nabla g_1(x^\ast),\dots,\nabla g_k(x^\ast)
\]
are linearly independent. Let $\alpha_1,\dots,\alpha_k$ be the Lagrange multipliers satisfying
\[
\nabla F(x^\ast)
=
\alpha_1\nabla g_1(x^\ast)
+\cdots+
\alpha_k\nabla g_k(x^\ast).
\]
Then, for every $u\in\mathbb{R}^n$ satisfying \(\nabla g_j(x^\ast)^{\top}u=0\), \(j=1,\dots,k\), we have
\[
u^{\top}
\left(
\nabla^2F(x^\ast)
-
\sum_{j=1}^k
\alpha_j\nabla^2g_j(x^\ast)
\right)u
\le0.
\]
\end{lemma}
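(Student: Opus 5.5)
The plan is to reduce the constrained problem to a one-variable unconstrained problem along a curve that stays inside the constraint set, using the Lagrangian
\[
L(x)=F(x)-\sum_{j=1}^k\alpha_j g_j(x).
\]
By the multiplier identity of Lemma~\ref{lem:Lagrange}, $\nabla L(x^\ast)=0$. Moreover, $L$ coincides with $F$ on the constraint set $\{g_1=\cdots=g_k=0\}$. Hence $x^\ast$ is also a local maximum of $L$ restricted to that set. The Hessian of $L$ at $x^\ast$ is exactly the matrix $\nabla^2F(x^\ast)-\sum_j\alpha_j\nabla^2g_j(x^\ast)$ appearing in the statement.

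The first and main step is to construct, for a fixed $u$ with $\nabla g_j(x^\ast)^\top u=0$ for all $j$, a $C^2$ curve $x(t)$ defined for $|t|$ small such that
\[
x(0)=x^\ast,\qquad x'(0)=u,\qquad g_j(x(t))=0 \text{ for all } j.
\]
Let $Dg$ denote the $k\times n$ Jacobian of $g=(g_1,\dots,g_k)$ at $x^\ast$, which has rank $k$ by the linear independence hypothesis. Seek the curve in the form
\[
x(t)=x^\ast+tu+Dg^{\top}w(t),\qquad w(t)\in\mathbb R^k,
\]
and consider the equation
\[
\Phi(t,w):=g\bigl(x^\ast+tu+Dg^{\top}w\bigr)=0.
\]
We have $\Phi(0,0)=0$, and $\partial_w\Phi(0,0)=Dg\,Dg^{\top}$, which is invertible because $Dg$ has full row rank. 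The implicit function theorem, applied to the $C^2$ map $\Phi$, therefore gives a $C^2$ solution $w(t)$ with $w(0)=0$. Differentiating $\Phi(t,w(t))=0$ at $t=0$ gives
\[
Dg\,u+Dg\,Dg^{\top}w'(0)=0.
\]
Since $Dg\,u=0$ and $Dg\,Dg^{\top}$ is invertible, $w'(0)=0$, and hence $x'(0)=u$. I expect this curve construction, including verifying the $C^2$ regularity and the tangency condition, to be the only non-routine part of the argument.

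Next, set $\varphi(t)=L(x(t))=F(x(t))$. Since $x(t)$ stays in the constraint set and is close to $x^\ast$ for small $|t|$, $\varphi$ has a local maximum at $t=0$, so $\varphi''(0)\le 0$. By the chain rule,
\[
\varphi''(0)=x'(0)^{\top}\nabla^2L(x^\ast)\,x'(0)+\nabla L(x^\ast)^{\top}x''(0)=u^{\top}\nabla^2L(x^\ast)\,u,
\]
because $\nabla L(x^\ast)=0$ removes the term involving the unknown $x''(0)$. This is exactly the claimed inequality $u^{\top}\bigl(\nabla^2F(x^\ast)-\sum_j\alpha_j\nabla^2g_j(x^\ast)\bigr)u\le0$.

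Finally, the case $u=0$ is trivial, so the argument covers every $u$ in the tangent space $\{u:\nabla g_j(x^\ast)^\top u=0,\ j=1,\dots,k\}$.
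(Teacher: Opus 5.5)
The paper does not prove this lemma; it quotes it from the cited optimization textbook. Your argument is correct and is essentially the standard proof of this second-order necessary condition. You build a $C^2$ feasible curve with $x(0)=x^\ast$ and $x'(0)=u$ via the implicit function theorem, using the ansatz $x^\ast+tu+Dg^{\top}w(t)$ with the invertible Gram matrix $Dg\,Dg^{\top}$. You then use $\varphi''(0)\le0$ for $\varphi=F\circ x=L\circ x$, where the $x''(0)$ term drops out because $\nabla L(x^\ast)=0$; this is the curve-on-the-constraint-surface argument used in the standard textbook treatment.
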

To determine the maximum $p$-Seidel energy among all $r$-regular graphs, we first solve the following constrained optimization problem. Thus, the optimization problem with constraints is stated in the following lemma, which is valid for $p\geq3$.

\begin{lemma}\label{L333}
Let $n\ge6$ and $p\geq3$. Consider the following problem
\[
F=F(t_1,\dots,t_{n-1})=\sum_{i=1}^{n-1}|t_i|^p
\]
subject to the constraints
\[
\sum_{i=1}^{n-1} t_i = 1,~~~~ \sum_{i=1}^{n-1} t_i^2 = n(n-1)-1,~~~~~ |t_i|\le n-1\ \forall i=1,\ldots,n-1.
\]
Then the maximum of $F$ is attained by all vectors that are $\big\{n-1,~\underbrace{-1,-1,\dots,-1}_{(n-2)~\text{times}}\big\}$ up to permutation.
\end{lemma}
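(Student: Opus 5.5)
The plan is to use compactness and a case split on whether the box constraint $|t_i|\le n-1$ is active. The feasible set is closed and bounded because it lies on a sphere, so $F$ attains its maximum at some point $t^\ast$. Either some coordinate satisfies $|t^\ast_i|=n-1$, or every coordinate lies strictly inside $(-(n-1),n-1)$. In the interior case Lemma~\ref{lem:Lagrange} and Lemma~\ref{lem:second-order} apply.

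\emph{Boundary case.} Suppose first that $t_1=n-1$. The remaining $n-2$ coordinates then satisfy $\sum_{i\ge2}t_i=2-n$ and $\sum_{i\ge2}t_i^2=n(n-1)-1-(n-1)^2=n-2$. Cauchy--Schwarz gives $(n-2)^2\le (n-2)\cdot(n-2)$, and this is an equality, so all these coordinates are equal to $-1$. This produces exactly the claimed vector, with value $F=(n-1)^p+(n-2)$. Suppose instead that $t_1=-(n-1)$. Then the others would have sum $n$ and sum of squares $n-2$. Cauchy--Schwarz would require $n^2\le (n-2)^2$, which is impossible. So the only boundary maximizer candidates are the permutations of $\{n-1,-1,\dots,-1\}$.

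\emph{Interior case.} The gradients of the two equality constraints are $\mathbf 1$ and $2t$. These are linearly dependent only when all $t_i$ are equal, and that contradicts $\sum t_i=1$ together with $\sum t_i^2=n(n-1)-1$. So Lemma~\ref{lem:Lagrange} yields $\alpha,\beta$ with
\[
p|t_i|^{p-2}t_i=\alpha+2\beta t_i \qquad\text{for all } i .
\]
The function $h(x)=p|x|^{p-2}x-2\beta x$ has an even derivative $h'(x)=p(p-1)|x|^{p-2}-2\beta$, which is monotone in $|x|$ because $p\ge3$. Hence $h'$ has at most two zeros, and $h(x)=\alpha$ has at most three roots. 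So the coordinates of $t^\ast$ take at most three distinct values.

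Next apply Lemma~\ref{lem:second-order}. For every $u\perp\mathbf 1,t^\ast$ we get
\[
\sum_i\bigl(p(p-1)|t_i|^{p-2}-2\beta\bigr)u_i^2\le0 .
\]
The admissible $u$ form a space of dimension $n-3$. So at most two coordinates can have $p(p-1)|t_i|^{p-2}>2\beta$, meaning at most two coordinates can be ``large'' in absolute value. All others lie in the region where $h$ is decreasing, and there $h(x)=\alpha$ has a single root. Thus $t^\ast$ consists of at most two exceptional coordinates $x,y$ and $n-1-k$ equal coordinates $z$, where $k\le2$.

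\emph{Finishing the interior case.} The constraints reduce to a problem in at most three real variables:
\[
x+y+(n-3)z=1,\qquad x^2+y^2+(n-3)z^2=n(n-1)-1 ,
\]
together with the smaller case of one exceptional coordinate. I would show that every such interior critical configuration has $F<(n-1)^p+(n-2)$.

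For one exceptional coordinate $x$, we have $z=(1-x)/(n-2)$, and the quadratic constraint forces $x\in\{n-1,\ \text{a negative root}\}$. The value $x=n-1$ is the boundary case. For the negative root, whose modulus is below $n-1$, a direct comparison works: it uses $|x|^p+(n-2)|z|^p$ and the convexity of $s\mapsto s^{p/2}$.

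For two exceptional coordinates, I would use
\[
|x|^p+|y|^p\le (x^2+y^2)\max(|x|,|y|)^{p-2}
\]
and the Cauchy--Schwarz bound on $z$. The goal is to show that the total falls short of $(n-1)^p+(n-2)$, since the mass $x^2+y^2$ splits between two coordinates each strictly below $n-1$.

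This two-exceptional-value comparison is the step I expect to be the main obstacle. If the crude estimate is not enough, I would parametrize by $z$, express $x,y$ as the roots of a quadratic, and check monotonicity of $F$ in $z$, using $n\ge6$ and $p\ge3$.
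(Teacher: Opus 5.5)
\textbf{There is a genuine gap in the two-exceptional-coordinate case.}

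Several parts of your proposal are sound and match the paper:
\begin{enumerate}
\item The boundary case is handled as in the paper.
\item The Lagrange setup is the same.
\item The one-exceptional-coordinate comparison via convexity of $s\mapsto s^{p/2}$ is what the paper does.
\item Your dimension count is valid, and it is a clean way to reach the configuration the paper also reaches: all but at most two coordinates equal a single value $z$ with $|z|\le\rho$. (The count: a form that is $\le 0$ on an $(n-3)$-dimensional subspace of $\mathbb{R}^{n-1}$ is positive definite on coordinate subspaces of dimension at most $2$.)
\end{enumerate}
Before using the ``decreasing region'' you must show $\beta>0$. Otherwise $h$ is strictly increasing and all coordinates coincide, which contradicts the constraints.

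The gap is the case $(x,y,z,\dots,z)$ with two exceptional coordinates, which you leave open. The estimate you propose for it cannot work. This one-parameter family passes through the maximizer itself: at $z=-1$ it equals $(n-1,-1,\dots,-1)$. For $z$ near $-1$ you have $x\approx n-1$ and $y\approx-1$. Your bound then gives
\[(x^2+y^2)\max(|x|,|y|)^{p-2}+(n-3)|z|^p\approx (n-1)^p+(n-1)^{p-2}+(n-3),\]
which exceeds $(n-1)^p+(n-2)$. So the crude estimate fails exactly where it matters. To rescue it you would have to use the critical-point equations quantitatively to keep $k=2$ critical points away from $z=-1$, and you do not do this. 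For the same reason, $F$ cannot be monotone in $z$ along this family, since it reaches the target value at the interior parameter $z=-1$. Your fallback would therefore need a genuine unimodality analysis, which is not supplied. Your intuition that the mass $x^2+y^2$ ``splits between two coordinates'' is also misleading: near the maximizer almost all of it sits in $x$.

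The missing idea is that the paper avoids any value comparison here. It shows that a three-value configuration $a<b<c$ with multiplicities $1,n-3,1$ violates the second-order condition outright:
\begin{enumerate}
\item From $h(a)=h(b)=h(c)=0$, the secant slopes of $f'$ on $[a,b]$ and on $[b,c]$ both equal $2\alpha_2$.
\item Since $f''(x)=p(p-1)|x|^{p-2}$ is convex for $p\ge3$, Hermite--Hadamard gives $d_a+d_b\ge0$ and $d_b+d_c\ge0$, where $d_x=h'(x)$. This is where $p\ge3$ is really used.
\item Take the admissible vector $u=\bigl(c-b,-\tfrac{c-a}{m},\dots,-\tfrac{c-a}{m},b-a\bigr)$ with $m=n-3\ge3$. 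Because $d_b<0$ and $m>2$, the quadratic form at $u$ is strictly greater than $-\tfrac{d_b}{2}\bigl((c-b)-(b-a)\bigr)^2\ge0$. This contradicts the second-order necessary condition.
\end{enumerate}
This argument eliminates your $k=2$ case entirely and leaves only the $k=1$ comparison you already sketched.
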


\begin{proof}Let us consider the feasible set to be 
\[
\mathcal{M}
=\Bigl\{
(t_1,\dots,t_{\,n-1})\in\mathbb{R}^{\,n-1} :
\ \sum_{i=1}^{n-1} t_i = 1,\ 
\sum_{i=1}^{n-1} t_i^{\,2} = n(n-1)-1,\ 
|t_i|\le n-1\ \forall i
\Bigr\}.
\]
Then $\mathcal{M}$ is non-empty as $t^{(0)}=\{n-1,~\underbrace{-1,-1,\dots,-1}_{(n-2)~\text{times}}\}\in\mathcal{M}$. The set $\mathcal{M}$ is closed and bounded, hence compact. Since $F$ is continuous, the maximum of $F$ over $\mathcal{M}$ is attained at some
\[
t^\ast=(t_1^\ast,\dots,t_{n-1}^\ast)\in\mathcal{M}.
\]
\textbf{Case (1):} Suppose that $t^\ast$ has a coordinate on the boundary, i.e., $|t_j^\ast|=n-1~~\text{for some } j$. Without loss of generality, assume that $j=z$, $1\le z\le(n-1)$.\\\\
\textbf{Subcase (1.1):} Let $t_z^\ast = n-1$.

We have
\[
\sum_{i\ne z,~i=1}^{n-1} t_i^\ast
= 1 - t_z^\ast
= 1 - (n-1)
= 2-n.
\]
Thus the arithmetic mean of $t_1^\ast,t_2^\ast, \dots,t_{z-1}^\ast,t_{z+1}^\ast,\dots,t_{n-1}^\ast$ is
\[
\frac{1}{n-2}\sum_{i\ne z,~i=1}^{n-1} t_i^\ast
= \frac{2-n}{n-2}
= -1.
\]
We have
\[
\sum_{i\ne z,~i=1}^{n-1} (t_i^\ast)^2
= \bigl(n(n-1)-1\bigr) - (t_z^\ast)^2
= n(n-1)-1-(n-1)^2
= n-2.
\]
Thus the arithmetic mean of the squares is
\[
\frac{1}{n-2}\sum_{i\ne z,~i=1}^{n-1} (t_i^\ast)^2
= \frac{n-2}{n-2}
=1.
\]
By the Cauchy-Schwarz inequality,
\[
\left(\sum_{i\ne z}t_i^\ast\right)^2
\le
(n-2)\sum_{i\ne z}(t_i^\ast)^2.
\]
Since
\[
\left(\sum_{i\ne z}t_i^\ast\right)^2
=(n-2)^2
=(n-2)\sum_{i\ne z}(t_i^\ast)^2.
\]
Hence equality holds in the Cauchy-Schwarz inequality. Thus all $t_i^\ast$, $i\ne z$, are equal. Since
\[
\sum_{i\ne z}t_i^\ast=2-n,
\]
we obtain \(t_i^\ast=-1\), \(i\ne z\). Therefore,
\[
t^\ast\ \text{is}\ \left\{
n-1,
\underbrace{-1,\ldots,-1}_{n-2\text{ times}}
\right\} ~\text{up to permutation}.
\]
\textbf{Subcase (1.2):} Let $t_z^\ast = -(n-1)$.

We have
\[
\sum_{i\ne z,~i=1}^{n-1} t_i^\ast
= 1 - t_z^\ast
= 1 + (n-1)
= n.
\]
Thus the arithmetic mean of $t_1^\ast,t_2^\ast, \dots,t_{z-1}^\ast,t_{z+1}^\ast,\dots,t_{n-1}^\ast$ is
\[
\frac{1}{n-2} \sum_{i\ne z,~i=1}^{n-1} t_i^\ast
= \frac{n}{n-2}.
\]
Again we have
\[
\sum_{i\ne z,~i=1}^{n-1} (t_i^\ast)^2
= \bigl(n(n-1)-1\bigr) - (t_z^\ast)^2
= n(n-1)-1-(n-1)^2
= n-2.
\]
Thus, the arithmetic mean of the squares is
\[
\frac{1}{n-2}\sum_{i\ne z,~i=1}^{n-1} (t_i^\ast)^2
= \frac{n-2}{n-2}
=1.
\]
By the Cauchy-Schwarz inequality,
\[
\left(\sum_{i\ne z}t_i^\ast\right)^2
\le
(n-2)\sum_{i\ne z}(t_i^\ast)^2.
\]
Hence
\[
n^2\le(n-2)^2,
\]
which is impossible for $n\ge3$. Hence, this subcase is impossible for a feasible point (a point that satisfies all the given constraints).

Combining the two subcases, we conclude that 
\[
t^\ast\ \text{is}\ \left\{
n-1,
\underbrace{-1,\ldots,-1}_{n-2\text{ times}}
\right\} ~\text{up to permutation}.
\]
\textbf{Case (2):} Assume that \(|t_i^\ast|<n-1\), \(i=1,\ldots,n-1\). Thus $t^\ast$ lies in the interior with respect to the constraints $|t_i|\le n-1$, while still satisfying the two equality constraints. Therefore, locally, we only need to consider the two equality constraints.

Let us define
\[
g_1(t)=g_1(t_1,\ldots,t_{n-1})=\sum_{i=1}^{n-1} t_i - 1,\quad
g_2(t)=g_2(t_1,\ldots,t_{n-1})=\sum_{i=1}^{n-1} t_i^2 - \bigl(n(n-1)-1\bigr).
\]
The gradients of the constraints are

\[\nabla g_1(t)=\big(\underbrace{1,1,\dots,1}_{(n-1)~\text{times}}\big),\qquad
\nabla g_2(t)=\big(\underbrace{2t_1,2t_2,\dots,2t_{n-1}}_{(n-1)~\text{times}}\big).
\]
The vectors $\nabla g_1(t^\ast)$ and $\nabla g_2(t^\ast)$ are linearly independent. Indeed, if they were linearly dependent, then $t_1^\ast=\cdots=t_{n-1}^\ast$. The first constraint would then give $t_i^\ast=\frac{1}{n-1}$ for every $i$, which contradicts the second constraint. Therefore, the hypotheses of Lemma~\ref{lem:Lagrange} are satisfied.

Therefore, by Lemma \ref{lem:Lagrange} at $t^\ast$ there exist $\alpha_1,~\alpha_2\in\mathbb{R}$ such that
\begin{equation}\label{eq:Lagrange-vector}
\nabla F(t^\ast)=\alpha_1\,\nabla g_1(t^\ast)+\alpha_2\,\nabla g_2(t^\ast),
\end{equation}
together with the conditions
\[
g_1(t^\ast)=0, \quad g_2(t^\ast)=0.
\]
Now computing $\nabla F$ in each coordinate $t_j$ we have
\[
\frac{\partial}{\partial t_j}|t_j|^p
=p\,|t_j|^{p-1}\operatorname{sgn}(t_j).
\]
Thus
\[
\Big(\nabla F(t)\Big)_j = p\,t_j|t_j|^{p-2}.
\]
Computing $\nabla g_1,~\nabla g_2$ in each coordinate $t_j$ we have 
\[
\frac{\partial g_1}{\partial t_j}=1,~~\frac{\partial g_2}{\partial t_j}=2t_j.
\]
Substituting into equality (\ref{eq:Lagrange-vector}), from the $j$-th coordinate of that vector equation, we obtain
\[
p\,|t^\ast_j|^{p-1}\operatorname{sgn}(t^\ast_j)
=\alpha_1\cdot 1 + \alpha_2\cdot 2 t_j^\ast= \alpha_1 + 2\alpha_2 t_j^\ast.
\]
Finally,
\begin{equation}\label{eq:Lag-coordinate}
p\,|t^\ast_j|^{p-1}\operatorname{sgn}(t^\ast_j)-\alpha_1-2\alpha_2\,t_j^\ast=0,
~~~j=1,\dots,n-1.
\end{equation}
We now prove that an interior maximizer can have at most two distinct coordinate values. 

Define \(f(x)=|x|^p\) and \(h(x)=f'(x)-\alpha_1-2\alpha_2x=p\,x|x|^{p-2}-\alpha_1-2\alpha_2x\).

By \eqref{eq:Lag-coordinate}, every coordinate $t_i^\ast$ is a zero of $h$. Moreover,
\[
h'(x)
=
p(p-1)|x|^{p-2}-2\alpha_2.
\]
If $\alpha_2\le0$, then $h$ is strictly increasing on $\mathbb R$. Hence, $h$ has at most one zero, which would imply that all coordinates $t_i^\ast$ are equal. This is impossible because of the two constraints. Therefore, \(\alpha_2>0\).

Set \(\rho=\left(\frac{2\alpha_2}{p(p-1)}
\right)^{\frac{1}{p-2}}\). Then \(h'(x)>0 \quad \text{for } |x|>\rho,
\qquad
h'(x)<0 \quad \text{for } |x|<\rho\).

Thus $h$ is increasing on $(-\infty,-\rho)$, decreasing on $(-\rho,\rho)$, and increasing on $(\rho,\infty)$. Hence $h$ has at most three distinct real zeros.

Let $u=(u_1,\ldots,u_{n-1})$ satisfy
\[
\sum_{i=1}^{n-1}u_i=0
\qquad\text{and}\qquad
\sum_{i=1}^{n-1}t_i^\ast u_i=0.
\]
Since \(\nabla g_1(t^\ast)=(1,\ldots,1)\), \(\nabla g_2(t^\ast)=2(t_1^\ast,\ldots,t_{n-1}^\ast)\), these conditions are equivalent to
\[
\nabla g_1(t^\ast)^\top u=0,
\qquad
\nabla g_2(t^\ast)^\top u=0.
\]
Moreover, \(\nabla^2g_1=0\), \(\nabla^2g_2=2I\), and
\[
\nabla^2F(t^\ast)
=
\operatorname{diag}\left(
p(p-1)|t_1^\ast|^{p-2},\ldots,
p(p-1)|t_{n-1}^\ast|^{p-2}
\right).
\]
Therefore, by Lemma~\ref{lem:second-order},
\begin{equation}\label{eq:second-order}
\sum_{i=1}^{n-1}
\left(
p(p-1)|t_i^\ast|^{p-2}-2\alpha_2
\right)u_i^2
\le0.
\end{equation}
Suppose that a value $x$ occurs at least twice among the coordinates of $t^\ast$. Choose two coordinates equal to $x$, and let $u$ have entries $1$ and $-1$ at these two coordinates, respectively, and $0$ elsewhere. Then
\[
\sum_{i=1}^{n-1}u_i=0
\qquad\text{and}\qquad
\sum_{i=1}^{n-1}t_i^\ast u_i=0.
\]
Hence, by \eqref{eq:second-order},
\[
2\left(
p(p-1)|x|^{p-2}-2\alpha_2
\right)\le0.
\]
Therefore,
\[
h'(x)=p(p-1)|x|^{p-2}-2\alpha_2\le0,
\]
and consequently
\[
|x|\le\rho.
\]
Since $h$ is strictly decreasing on $(-\rho,\rho)$, at most one distinct zero of $h$ lying in $[-\rho,\rho]$ can occur more than once among the coordinates of $t^\ast$.

We show by contradiction that three distinct coordinate values, say \(a<b<c\), of $t^\ast$ are impossible. 
Since every
coordinate value is a zero of $h$, and $h$ is strictly increasing on $(-\infty,-\rho)$, strictly decreasing on $(-\rho,\rho)$, and strictly increasing on $(\rho,\infty)$, the existence of three distinct zeros implies
\[
a<-\rho<b<\rho<c.
\]
Therefore,
\[
h'(a)>0,\qquad h'(b)<0,\qquad h'(c)>0.
\]
Since every coordinate value that occurs at least twice must satisfy
$h'(x)\le0$, neither $a$ nor $c$ can occur more than once.
Consequently, $a$ and $c$ occur exactly once, while $b$ occurs
$n-3$ times.

Let \(d_a=h'(a)\), \(d_b=h'(b)\), \(d_c=h'(c)\). Since $a<b<c$ are three distinct zeros of $h$, and $h$ is strictly increasing on $(-\infty,-\rho)$, strictly decreasing on $(-\rho,\rho)$, and strictly increasing on $(\rho,\infty)$, we have
\[
a<-\rho<b<\rho<c.
\]
Therefore,
\[
d_a>0,\qquad d_b<0,\qquad d_c>0.
\]
Moreover, since $h(a)=h(b)=h(c)=0$, we have
\[
f'(b)-f'(a)=2\alpha_2(b-a)
\]
and
\[
f'(c)-f'(b)=2\alpha_2(c-b).
\]
Hence
\[
\frac{f'(b)-f'(a)}{b-a}
=
2\alpha_2
=
\frac{f'(c)-f'(b)}{c-b}.
\]
Since \(f''(x)=p(p-1)|x|^{p-2}\) is convex for $p\ge3$, for every $s\in[0,1]$ we have
\[
f''((1-s)a+sb)
\le
(1-s)f''(a)+s f''(b).
\]
Integrating with respect to $s$ over $[0,1]$, we obtain
\[
\int_0^1 f''((1-s)a+sb)\,ds
\le
\frac{f''(a)+f''(b)}{2}.
\]
Using the substitution \(x=(1-s)a+sb\), we get
\[
\int_0^1 f''((1-s)a+sb)\,ds
=
\frac{1}{b-a}\int_a^b f''(x)\,dx.
\]
Therefore,
\[
\frac{1}{b-a}\int_a^b f''(x)\,dx
\le
\frac{f''(a)+f''(b)}{2}.
\]
By the fundamental theorem of calculus,
\[
\frac{1}{b-a}\int_a^b f''(x)\,dx
=
\frac{f'(b)-f'(a)}{b-a}
=
2\alpha_2.
\]
Hence
\[
2\alpha_2
\le
\frac{f''(a)+f''(b)}{2}.
\]
Similarly,
\[
2\alpha_2
\le
\frac{f''(b)+f''(c)}{2}.
\]
Since \(d_a=f''(a)-2\alpha_2\), \(d_b=f''(b)-2\alpha_2\), \(d_c=f''(c)-2\alpha_2\).
Therefore, \(d_a+d_b\ge0\), \(d_b+d_c\ge0\), and consequently, \(d_a\ge-d_b\), \(d_c\ge-d_b\).

Let \(m=n-3\ge3\). Since the multiplicities of $a,b,c$ are $1,m,1$, respectively, we choose
\[
u=
\left(
c-b,\,
\underbrace{-\frac{c-a}{m},\ldots,-\frac{c-a}{m}}_{m\text{ times}},\,
b-a
\right).
\]
Then
\[
\sum_{i=1}^{n-1}u_i
=
(c-b)-(c-a)+(b-a)=0
\]
and
\[
\sum_{i=1}^{n-1}t_i^\ast u_i
=
a(c-b)-b(c-a)+c(b-a)=0.
\]
Hence $u$ is an admissible vector in
\eqref{eq:second-order}. Therefore,
\[
d_a(c-b)^2
+
\frac{d_b}{m}(c-a)^2
+
d_c(b-a)^2
\le0.
\]
On the other hand, using \(d_a\ge-d_b\), \(d_c\ge-d_b\), we obtain
\[
\begin{aligned}
&d_a(c-b)^2
+\frac{d_b}{m}(c-a)^2
+d_c(b-a)^2\ge
-d_b\bigl((c-b)^2+(b-a)^2\bigr)
+\frac{d_b}{m}(c-a)^2.
\end{aligned}
\]
Since $d_b<0$ and $m\ge3$,
\[
\frac{d_b}{m}>\frac{d_b}{2}.
\]
Hence
\[
\begin{aligned}
&d_a(c-b)^2
+\frac{d_b}{m}(c-a)^2
+d_c(b-a)^2>
-d_b\bigl((c-b)^2+(b-a)^2\bigr)
+\frac{d_b}{2}(c-a)^2\\
&\hspace{5.5cm}=
-\frac{d_b}{2}
\bigl((c-b)-(b-a)\bigr)^2
\ge0,
\end{aligned}
\]
which contradicts \eqref{eq:second-order}.

Therefore, an interior maximizer cannot have three distinct coordinate values. Hence, $t^\ast$ has at most two distinct coordinate values.

Let $a$ and $b$ be the two distinct coordinate values of $t^\ast$. We claim that one of the two values $a$ and $b$ must occur exactly once. Suppose, to the contrary, that both $a$ and $b$ occur at least twice.

Choose two coordinates equal to $a$. Let $u$ have entries $1$ and $-1$ at these two coordinates, respectively, and $0$ elsewhere. Then
\[
\sum_{i=1}^{n-1}u_i=0
\]
and
\[
\sum_{i=1}^{n-1}t_i^\ast u_i=a-a=0.
\]
Hence $u$ is admissible in \eqref{eq:second-order}. Therefore, \(2h'(a)\le0\), and thus \(h'(a)\le0\). Applying the same argument to two coordinates equal to $b$, we obtain \(h'(b)\le0\).

By \eqref{eq:Lag-coordinate}, every coordinate value of $t^\ast$ is a zero of $h$. Hence \(h(a)=h(b)=0\). Also, \(h'(x)\le0\) only for $|x|\le\rho$, and $h$ is strictly decreasing on $[-\rho,\rho]$. Therefore, $h$ can have at most one zero in this interval. But $a\ne b$, and both $a$ and $b$ are zeros of $h$ satisfying $h'(a)\le0$ and $h'(b)\le0$, which is impossible.

Therefore, one of the two values $a$ and $b$ must occur exactly once. 
Up to reordering, we may write
\[
t_1^\ast=a,\qquad
t_2^\ast=\cdots=t_{n-1}^\ast=b.
\]
The constraints give
\begin{align}
a+(n-2)b&=1, \label{eq:ab1}\\
a^2+(n-2)b^2&=n(n-1)-1. \label{eq:ab2}
\end{align}
From \eqref{eq:ab1},
\[
a=1-(n-2)b.
\]
Substituting this into \eqref{eq:ab2}, we obtain
\[
(1-(n-2)b)^2+(n-2)b^2=n(n-1)-1.
\]
Equivalently,
\[
(n-2)(b+1)\bigl((n-1)b-(n+1)\bigr)=0.
\]
Therefore, \(b=-1\) or \(b=\frac{n+1}{n-1}\).

If $b=-1$, then \(a=1-(n-2)(-1)=n-1\). Thus
\[
t^\ast
=
\left\{
n-1,
\underbrace{-1,\ldots,-1}_{n-2\text{ times}}
\right\},
\]
which is precisely the boundary solution obtained in Case~(1).

Now suppose that \(b=\frac{n+1}{n-1}\). Then
\[
a=1-(n-2)b
=-(n-1)+\frac{2}{n-1}.
\]
Let \(M=n-1\), \(A=M-\frac{2}{M}\), \(B=1+\frac{2}{M}\). Then \(|a|=A\), \(b=B\), and, since $n\ge6$,
\[
M>A>B>1.
\]
Since both the vector \((M,-1,\ldots,-1)\) and \((a,b,\ldots,b)\) satisfy the same sum-of-squares constraint, we have
\begin{equation}\label{eq:second-moment-relation}
M^2-A^2=(M-1)(B^2-1).
\end{equation}
Let \(\varphi(x)=x^{\frac{p}{2}}\). Since $p\ge3$, the derivative \(\varphi'(x)=\frac{p}{2}x^{\frac p2-1}\) is strictly increasing on $(0,\infty)$.

By the Mean Value Theorem, there exist \(\xi_1\in(A^2,M^2)\) and \(\xi_2\in(1,B^2)\) such that
\[
\frac{M^p-A^p}{M^2-A^2}
=\varphi'(\xi_1)\quad\text{and}\quad\frac{B^p-1}{B^2-1}=\varphi'(\xi_2).
\]
Since \(\xi_1>A^2>B^2>\xi_2\), we have
\[
\varphi'(\xi_1)>\varphi'(\xi_2).
\]
Therefore,
\[
\frac{M^p-A^p}{M^2-A^2}
>
\frac{B^p-1}{B^2-1}.
\]
Using \eqref{eq:second-moment-relation}, we obtain
\[
M^p-A^p>(M-1)(B^p-1).
\]
Hence
\[
A^p+(M-1)B^p<M^p+(M-1).
\]
Therefore,
\[
|a|^p+(n-2)|b|^p
<
(n-1)^p+(n-2).
\]
Thus this second solution cannot be a global maximizer. Consequently, the only global maximizers are
\[
t^\ast\ \text{is}\ \left\{
n-1,
\underbrace{-1,\ldots,-1}_{n-2\text{ times}}
\right\} ~\text{up to permutation}.
\]
This completes the proof.
\end{proof}
In the following result, Berman, Shaked-Monderer, Singh, and Zhang determine the Seidel spectrum of the complete bipartite graph $K_{a,b}$.

\begin{lemma}\rm\cite[Lemma 2.1(c)]{AB2019}\label{L45}
Let $K_{a,b}$ be the complete bipartite graph with $n=a+b$ vertices. Then the Seidel spectrum of $K_{a,b}$ is 
\[
\left\{
n-1,
\underbrace{-1,\ldots,-1}_{n-1\text{ times}}
\right\}.
\]
\end{lemma}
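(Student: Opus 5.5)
The statement is Lemma~\ref{L45}: the Seidel spectrum of $K_{a,b}$ is $\{n-1,(-1)^{(n-1)}\}$. I would prove it by Seidel switching, reducing to the empty graph.

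Let $X\sqcup Y$ be the bipartition of $K_{a,b}$. Switch with respect to the partition $(X,Y)$. Every pair $u\in X$, $v\in Y$ is adjacent in $K_{a,b}$, so after switching none of these pairs is adjacent. Pairs inside $X$ or inside $Y$ were nonadjacent and stay nonadjacent. So the switched graph $G'$ is the edgeless graph $\overline{K_n}$. By \eqref{eq:Seidel-switching} we have $S(G')=DS(K_{a,b})D$, and the spectra coincide.

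Now $S(\overline{K_n})=J-I$, because $A=0$. The matrix $J$ has eigenvalue $n$ on $\mathbf1$ and eigenvalue $0$ on $\mathbf1^\perp$, which has dimension $n-1$. Hence $J-I$ has spectrum $\{n-1,\underbrace{-1,\ldots,-1}_{n-1}\}$, and so does $S(K_{a,b})$.

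A direct alternative avoids switching. Take $D=\operatorname{diag}(d_i)$ with $d_i=1$ on $X$ and $-1$ on $Y$, and set $d=(d_1,\ldots,d_n)^\top$. Then $S(K_{a,b})=dd^\top-I$, whose spectrum follows in the same way, since $dd^\top$ has rank one and $\|d\|^2=n$.

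Every step is elementary. The only point to check is the sign convention: cross pairs are edges, so their Seidel entry is $-1=d_id_j$, and within-part pairs are non-edges, so their entry is $+1=d_id_j$. This confirms $S=dd^\top-I$.
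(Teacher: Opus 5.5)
Your proof is correct. Switching $K_{a,b}$ with respect to its bipartition $(X,Y)$ turns every edge into a non-edge, so $DS(K_{a,b})D=J-I$, whose spectrum is $\{n-1,(-1)^{(n-1)}\}$. Equivalently, $S(K_{a,b})=dd^\top-I$ with $d\in\{\pm1\}^n$ and $\|d\|^2=n$, and your sign check confirms this identity entry by entry.

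The paper gives no proof of this lemma; it is quoted from \cite{AB2019}. So your argument is a self-contained replacement, not an alternative to an in-paper proof. It fits the paper's own arguments in two ways:
\begin{itemize}
\item Your identity $S(K_{a,b})=dd^\top-I$ is the converse of the computation in the proof of Lemma~\ref{lem:Seidel-complete-bipartite}. There the paper shows that $S(G)=-I+nvv^\top$ with $v_i=\pm1/\sqrt n$ forces $G$ to be complete bipartite, and $d=\sqrt n\,v$.
\item Your switching step shows explicitly that $K_{a,b}$ is $S$-equivalent to $\overline{K_n}$. This is the $SC$-equivalence with $K_n$ asserted in the remark after the lemma.
\end{itemize}

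One cosmetic point: the paper's standing convention requires graphs to have at least one edge. In the switching version it is therefore cleaner to treat $J-I$ simply as the matrix $DS(K_{a,b})D$; the identity \eqref{eq:Seidel-switching} is purely a matrix identity. Your rank-one argument avoids this issue altogether.
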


\begin{remark}
Lemma~\ref{L45} is consistent with the equality statement in Theorem~\ref{Th.2.1}. Indeed, for every $a,b\geq1$ with $a+b=n$,
\[
\operatorname{Spec}(S(K_{a,b}))
=
\left\{
n-1,
\underbrace{-1,\ldots,-1}_{n-1\text{ times}}
\right\}
\]
and therefore
\[
\mathcal{E}(S(K_{a,b}))=2(n-1)
=\mathcal{E}(S(K_n)).
\]
Although $K_{a,b}$ is generally not isomorphic to $K_n$, it is $SC$-equivalent to $K_n$. Thus, the equality in the minimum Seidel-energy bound is an $SC$-equivalence characterization, rather than an
isomorphism characterization.
\end{remark}

\begin{lemma}\rm\cite[Theorem 2.5.6]{RA2013}\label{234}
Every real symmetric matrix $A$ admits the spectral decomposition
\[
A \;=\; \sum_{i=1}^{t} \lambda_i\, u_i u_i^{\top},
\]
where $\lambda_1,\ldots,\lambda_t$ are the nonzero eigenvalues of $A$, and $u_1,\ldots,u_t$ are the corresponding orthonormal eigenvectors.
\end{lemma}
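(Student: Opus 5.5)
The statement is the spectral decomposition theorem for real symmetric matrices, which the paper cites from the literature. I will derive it from the real spectral theorem: every real symmetric matrix is orthogonally diagonalizable. Once that is available, the rank-one expansion is routine bookkeeping.

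First I will show that every eigenvalue of $A$ is real and has a real eigenvector. If $Ax=\lambda x$ with $x\in\mathbb{C}^n\setminus\{0\}$, then
\[
\bar\lambda\, x^{*}x=(Ax)^{*}x=x^{*}A^{\top}x=x^{*}Ax=\lambda\, x^{*}x .
\]
Hence $\lambda=\bar\lambda\in\mathbb{R}$. Since $A-\lambda I$ is then a real singular matrix, its kernel contains a nonzero real vector.

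Next I will argue by induction on $n$. Pick a real unit eigenvector $q_1$ of $A$ with eigenvalue $\mu_1$, and let $W=q_1^{\perp}$. The subspace $W$ is $A$-invariant: if $w\in W$, then
\[
q_1^{\top}(Aw)=(Aq_1)^{\top}w=\mu_1\,q_1^{\top}w=0 .
\]
In an orthonormal basis of $W$, the restriction of $A$ to $W$ is again a real symmetric matrix, now of order $n-1$. The induction hypothesis then gives an orthonormal eigenbasis of $W$. Adjoining $q_1$ produces an orthonormal eigenbasis $q_1,\dots,q_n$ of $\mathbb{R}^n$. Equivalently, $A=Q\,\mathrm{diag}(\mu_1,\dots,\mu_n)\,Q^{\top}$ with $Q=[q_1\,\cdots\,q_n]$ orthogonal.

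Finally, I will expand the product column by column:
\[
A=\sum_{i=1}^{n}\mu_i\, q_iq_i^{\top}.
\]
The terms with $\mu_i=0$ contribute nothing, so I discard them. Relabelling the remaining $t$ nonzero eigenvalues (counted with multiplicity) as $\lambda_1,\dots,\lambda_t$, with orthonormal eigenvectors $u_1,\dots,u_t$, gives exactly the stated form.

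The main obstacle is the invariance of $q_1^{\perp}$, since it is the only step that actually uses symmetry beyond realness of the spectrum. It is a one-line computation given above. Everything else is routine, so I would follow this standard proof rather than reprove any further details.
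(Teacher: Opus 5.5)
Your proof is correct. The paper gives no argument for this lemma; it quotes the spectral theorem from the cited matrix-analysis text. Your proposal therefore supplies a self-contained derivation where the paper relies on a citation.

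Your route is realness of the spectrum, then induction on $n$ using the $A$-invariant complement $q_1^{\perp}$. This differs from the derivation such texts usually give, which goes through Schur triangularization. Once the eigenvalues are known to be real, $A$ is orthogonally similar to a real upper triangular matrix, and symmetry forces that matrix to be diagonal. The Schur route is shorter when triangularization is already available, and it extends verbatim to Hermitian and normal matrices. Yours is fully elementary and needs nothing beyond orthogonal complements.

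One by-product of your argument is worth stating explicitly. It yields the whole orthogonal matrix $Q$, hence $\sum_{i=1}^{n}q_iq_i^{\top}=QQ^{\top}=I$, including eigenvectors for the eigenvalue $0$. This is the completeness identity the paper invokes later, in the proof of Lemma~\ref{lem:Seidel-complete-bipartite}, when it writes $\sum_{i=1}^{n}u_iu_i^{\top}=I$. There all eigenvalues are nonzero, so $t=n$ and the identity also follows from the lemma as stated. Your formulation gives it directly.
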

We now characterize the graphs whose Seidel spectrum is equal to Seidel spectrum of $K_{a,b}$, see  Lemma~\ref{L45}, up to isomorphism.

\begin{lemma}\label{lem:Seidel-complete-bipartite}
Let $G$ be a graph on $n\ge 2$ vertices. Then the following statements are equivalent:
\begin{enumerate}
\item \(\operatorname{Spec}(S(G))
=
\left\{
n-1,\,
\underbrace{-1,-1,\ldots,-1}_{n-1\text{ times}}
\right\}\).
\item There exists an integer $s$ with $1\le s\le n-1$, such that \(G\cong K_{s,n-s}\).
\end{enumerate}
\end{lemma}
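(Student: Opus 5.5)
The direction (2)$\Rightarrow$(1) is exactly Lemma~\ref{L45}, so the work is in (1)$\Rightarrow$(2). The plan is to convert the spectral hypothesis into a rank-one matrix identity via Lemma~\ref{234}, and then read off the structure of $G$ from the entries.

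\textbf{Step 1: a rank-one identity.} Assume $\operatorname{Spec}(S(G))=\{n-1,-1,\ldots,-1\}$. Then $S(G)+I$ is real symmetric with spectrum $\{n,0,\ldots,0\}$. By Lemma~\ref{234}, we can write
\[
S(G)+I=n\,uu^{\top},
\]
where $u$ is a unit eigenvector of $S(G)$ for the eigenvalue $n-1$.

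\textbf{Step 2: the entries of $u$.} The diagonal of $S(G)+I$ consists of ones, so $n u_i^2=1$ for every $i$. Hence $u_i=\varepsilon_i/\sqrt n$ with $\varepsilon_i\in\{1,-1\}$. Substituting back gives
\[
S(G)_{ij}=\varepsilon_i\varepsilon_j \qquad \text{for all } i\ne j.
\]
Equivalently, $S(G)=DJD-I$ with $D=\operatorname{diag}(\varepsilon_1,\ldots,\varepsilon_n)$. By \eqref{eq:Seidel-switching}, this already shows that $G$ is switching equivalent to the empty graph. To identify $G$ itself, set $V_1=\{v_i:\varepsilon_i=1\}$ and $V_2=\{v_i:\varepsilon_i=-1\}$. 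Then $S(G)_{ij}=+1$, meaning $v_iv_j$ is a non-edge, exactly when $v_i,v_j$ lie in the same part. Likewise $S(G)_{ij}=-1$, meaning $v_iv_j$ is an edge, exactly when they lie in different parts. So $G$ is complete bipartite with parts $V_1$ and $V_2$.

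\textbf{Step 3: both parts are nonempty.} If one part were empty, then $\varepsilon$ would be constant and $G$ would have no edges. This contradicts the paper's standing assumption that graphs have at least one edge. Hence $s=|V_1|$ satisfies $1\le s\le n-1$, and $G\cong K_{s,n-s}$.

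The only delicate point is this edgeless case. The empty graph $\overline{K_n}$ also has Seidel spectrum $\{n-1,-1,\ldots,-1\}$, so the equivalence depends on the standing convention that graphs are non-empty. The proof should say explicitly that this convention is where it is used.
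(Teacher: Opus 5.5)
Your proposal is correct and follows essentially the same route as the paper. The paper also applies the spectral decomposition to obtain $S(G)=-I+nvv^{\top}$, uses the zero diagonal to force $v_i=\pm 1/\sqrt{n}$, partitions the vertices by sign to get a complete bipartite graph, and invokes the at-least-one-edge convention to make both parts nonempty, with (2)$\Rightarrow$(1) taken from Lemma~\ref{L45}. Your observation that $\overline{K_n}$ (with Seidel matrix $J-I$) has the same spectrum correctly pins down where that convention is needed.
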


\begin{proof}
$(1)\Rightarrow(2):$
Suppose that
\[
\operatorname{Spec}(S(G))
=
\left\{
n-1,\,
\underbrace{-1,-1,\ldots,-1}_{n-1\text{ times}}
\right\}.
\]
Since $S(G)$ is real symmetric, by Lemma~\ref{234} it admits an
orthonormal spectral decomposition. Let $u_1$ be a unit eigenvector
corresponding to the eigenvalue $n-1$, and let
$u_2,\ldots,u_n$ be orthonormal eigenvectors corresponding to the
eigenvalue $-1$. Then
\[
S(G)
=
(n-1)u_1u_1^{\top}
-
\sum_{i=2}^{n}u_i u_i^{\top}.
\]
Since \(\sum_{i=1}^{n}u_i u_i^{\top}=I\), we obtain
\begin{align*}
S(G)
&=
(n-1)u_1u_1^{\top}
-\bigl(I-u_1u_1^{\top}\bigr)\\
&=
-I+n u_1u_1^{\top}.
\end{align*}
Put $v=u_1$. Hence
\[
S(G)=-I+nvv^{\top}.
\]
Since every diagonal entry of a Seidel matrix is zero, \[
0=S(G)_{ii}=-1+n v_i^2,
\]
and therefore
\[
v_i^2=\frac1n
\qquad\text{for every }i=1,\ldots,n.
\]
Thus
\[
v_i=\pm\frac1{\sqrt n}.
\]
Define \(z_i=\sqrt n\,v_i\in\{-1,1\}\). For $i\neq j$,
\[
S(G)_{ij}
=
n v_i v_j
=
z_i z_j.
\]
Let \(V_+=\{\,v_i:z_i=1\,\}\), \(V_-=\{\,v_i:z_i=-1\,\}\). If two vertices $v_i,v_j$ belong to the same set, then $z_i z_j=1$, and consequently
\[
S(G)_{ij}=1.
\]
By the definition of the Seidel matrix, $v_i$ and $v_j$ are therefore non-adjacent. On the other hand, if $v_i\in V_+$ and $v_j\in V_-$, then $z_i z_j=-1$, and hence
\[
S(G)_{ij}=-1.
\]
Thus $v_i$ and $v_j$ are adjacent.

Consequently, there are no edges inside $V_+$ or $V_-$, while every vertex of $V_+$ is adjacent to every vertex of $V_-$. Therefore, $G$ is a complete bipartite graph.

Since $G$ has at least one edge, both $V_+$ and $V_-$ are nonempty. Let \(|V_+|=s\), \(|V_-|=n-s\) where $1\le s\le n-1$. Then we obtain
\[
G\cong K_{s,n-s}.
\]
$(2)\Rightarrow(1):$
Suppose that
\[
G\cong K_{s,n-s}
\]
for some $1\le s\le n-1$. By Lemma~\ref{L45}, the Seidel spectrum of every complete bipartite graph on $n$ vertices is
\[
\operatorname{Spec}(S(G))
=
\left\{
n-1,\,
\underbrace{-1,-1,\ldots,-1}_{n-1\text{ times}}
\right\}.
\]
This completes the proof.
\end{proof}

\begin{corollary}\label{L444}
Let $G$ be an $r$-regular graph on $n=2r$ vertices. Then the following statements are equivalent:
\begin{enumerate}
\item $\,\operatorname{Spec}(S(G))=\{n-1,~\underbrace{-1,-1,\dots,-1}_{(n-1)\text{ times}}\}$.
\item $\,G\cong K_{r,r}$.
\end{enumerate}
\end{corollary}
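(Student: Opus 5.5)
This corollary follows directly from Lemma~\ref{lem:Seidel-complete-bipartite}, combined with a regularity check. I will prove the two implications separately.

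For $(1)\Rightarrow(2)$: assume $G$ is $r$-regular of order $n=2r$ and its Seidel spectrum is $\{n-1,-1,\ldots,-1\}$. By Lemma~\ref{lem:Seidel-complete-bipartite}, $G\cong K_{s,n-s}$ for some $1\le s\le n-1$. In $K_{s,n-s}$ the vertices in the part of size $s$ have degree $n-s$, and those in the other part have degree $s$. Regularity forces $s=n-s$, so $s=n/2=r$. Hence $G\cong K_{r,r}$. One could also argue via Lemma~\ref{lem:t}: since $\lambda_1=n-1-2r=-1$ is the eigenvalue on $\mathbf 1$, this is consistent with the given spectrum. The degree argument alone is enough, though.

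For $(2)\Rightarrow(1)$: $K_{r,r}$ is complete bipartite on $n=2r$ vertices. So either Lemma~\ref{L45} or the implication $(2)\Rightarrow(1)$ of Lemma~\ref{lem:Seidel-complete-bipartite} immediately gives the stated spectrum. Note also that $K_{r,r}$ is indeed $r$-regular, so it lies in the class under consideration.

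No step is a genuine obstacle. The only point requiring care is ruling out unbalanced $K_{s,n-s}$, and this is exactly the degree comparison $s=n-s$. That comparison is valid because both parts are nonempty ($1\le s\le n-1$), so both degree values actually occur.
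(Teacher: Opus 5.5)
Your proposal is correct and follows the paper's proof: the forward direction uses Lemma~\ref{lem:Seidel-complete-bipartite} to get $G\cong K_{s,n-s}$ and then regularity to force $s=n-s=r$. The converse cites Lemma~\ref{L45} for the spectrum of $K_{r,r}$.
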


\begin{proof}
$(1)\Rightarrow(2):$
Suppose that
\[
\operatorname{Spec}(S(G))
=
\left\{
n-1,\,
\underbrace{-1,-1,\ldots,-1}_{(n-1)\text{ times}}
\right\}.
\]
By Lemma~\ref{lem:Seidel-complete-bipartite}, there exists an integer $s$, with $1\le s\le n-1$, such that
\[
G\cong K_{s,n-s}.
\]
In the complete bipartite graph $K_{s,n-s}$, every vertex in the part of size $s$ has degree $n-s$, while every vertex in the part of size $n-s$ has degree $s$.

Since $G$ is $r$-regular, all vertices of $G$ have the same degree $r$. Therefore, \(s=r\), \(n-s=r\). In particular, \(s=n-s\). Hence \(n=2s\). Since $n=2r$, it follows that \(s=r\). Therefore,
\[
G\cong K_{s,n-s}=K_{r,r}.
\]
$(2)\Rightarrow(1):$
Conversely, suppose that
\[
G\cong K_{r,r}.
\]
Since $n=2r$, the graph $K_{r,r}$ is a complete bipartite graph on $n$ vertices. By Lemma~\ref{L45}, the Seidel spectrum of every complete bipartite graph $K_{s,n-s}$ is
\[
\left\{
n-1,\,
\underbrace{-1,-1,\ldots,-1}_{(n-1)\text{ times}}
\right\}.
\]
Therefore,
\[
\operatorname{Spec}(S(G))
=
\left\{
n-1,\,
\underbrace{-1,-1,\ldots,-1}_{(n-1)\text{ times}}
\right\}.
\]
Hence $(1)$ and $(2)$ are equivalent.
\end{proof}
In the following theorem, we prove that among all $r$-regular graphs, the balanced complete bipartite graph (up to isomorphism) achieves the maximum $p$-Seidel energy with fixed order $n=2r$, where $p\geq3$.

\begin{theorem}\label{max}
Let $G$ be an $r$-regular graph on $n=2r$ vertices. Then for every $p\geq3$,
\[
\mathcal{E}_p(S(G)) \le (n-1)^p + (n-1),
\]
with equality if and only if $G\cong K_{r,r}$.
\end{theorem}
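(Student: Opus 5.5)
Since $G$ is $r$-regular with $n=2r$, Lemma~\ref{lem:t} shows that the Seidel eigenvalue belonging to $\mathbf 1$ is $\lambda_1=n-1-2r=-1$. So $\mathcal{E}_p(S(G))=1+\sum_{i=2}^n|\lambda_i|^p$, and it suffices to bound the remaining $n-1$ eigenvalues. Put $t_i=\lambda_{i+1}$ for $i=1,\dots,n-1$. These satisfy three conditions:
\begin{itemize}
\item the trace condition $\operatorname{tr}S(G)=0$ gives $\sum_{i=1}^{n-1}t_i=-\lambda_1=1$;
\item \eqref{ERT} gives $\sum_{i=1}^{n-1}t_i^2=n(n-1)-1$;
\item every Seidel eigenvalue satisfies $|t_i|\le n-1$, since the spectral radius is at most the maximum absolute row sum, which is $n-1$.
\end{itemize}
Hence $(t_1,\dots,t_{n-1})$ lies in the feasible set of Lemma~\ref{L333}. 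For $n\ge6$ and $p\ge3$ that lemma gives $\sum_{i=2}^n|\lambda_i|^p\le(n-1)^p+(n-2)$, and therefore
\[
\mathcal{E}_p(S(G))\le 1+(n-1)^p+(n-2)=(n-1)^p+(n-1).
\]

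\textbf{Equality.} Lemma~\ref{L333} says the maximum is attained only at the vector $\{n-1,-1,\dots,-1\}$ up to permutation. So equality forces $\operatorname{Spec}(S(G))=\{n-1,-1,\dots,-1\}$, and Corollary~\ref{L444} then gives $G\cong K_{r,r}$. Conversely, $K_{r,r}$ has this spectrum by Lemma~\ref{L45}, so it attains the bound.

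\textbf{Small orders.} Lemma~\ref{L333} needs $n\ge6$, so the even orders $n=2$ and $n=4$ must be checked directly; $n=2r$ is always even, so there is no $n=5$ case. Since graphs here have at least one edge, $r\ge1$.
\begin{itemize}
\item For $n=2$, $r=1$: the only graph is $K_2=K_{1,1}$, with Seidel spectrum $\{1,-1\}$, so equality holds trivially.
\item For $n=4$, $r=2$: the only $2$-regular graph on four vertices is $C_4=K_{2,2}$, so again the statement holds.
\end{itemize}

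The substantive work is entirely in Lemma~\ref{L333}, which is already proved. The only points needing care are confirming the box constraint $|t_i|\le n-1$ and handling the small orders $n=2,4$ separately.
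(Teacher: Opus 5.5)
Your proof is correct and follows the paper's argument step for step. You use $\lambda_1=-1$ from Lemma~\ref{lem:t}, verify the trace, sum-of-squares and row-sum constraints to place $(t_1,\dots,t_{n-1})$ in the feasible set of Lemma~\ref{L333}, invoke Corollary~\ref{L444} for the equality case, and handle $n=2,4$ directly as $K_{1,1}$ and $C_4\cong K_{2,2}$. The paper adds only two small things: a remark that such graphs are connected, which is not needed, and a one-line justification that $C_4$ is the unique $2$-regular graph on four vertices.
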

\begin{proof}
We first observe that every $r$-regular graph of order $n=2r$ is connected. Indeed, every component of an $r$-regular simple graph contains at least $r+1$ vertices. If $G$ were disconnected, then it
would have at least two components, and hence \(n\geq 2(r+1)=2r+2\), which contradicts $n=2r$.

We first consider the cases $r=1$ and $r=2$. If $r=1$, then $n=2$, and therefore
\[
G\cong K_{1,1}.
\]
If $r=2$, then $n=4$. Since every component of a $2$-regular simple graph is a cycle of length at least $3$, a $2$-regular graph on four vertices must be
\[
G\cong C_4\cong K_{2,2}.
\]
Thus the result follows in both cases.

Hence, assume that $r\ge3$.

Since $n=2r$, we have $n\ge6$, so
Lemma~\ref{L333} applies. Label the Seidel eigenvalues of $G$ as \(\lambda_1,\lambda_2,\ldots,\lambda_n\) so that $\lambda_1$ is the eigenvalue corresponding to $\mathbf 1$.
Since $G$ is $r$-regular and $n=2r$, Lemma~\ref{lem:t} gives
\[
\lambda_1=n-1-2r=-1.
\]
Set \(t_i=\lambda_{i+1}\), \(i=1,\ldots,n-1\). Since $\operatorname{tr}(S(G))=0$, we obtain
\[
\sum_{i=1}^{n-1}t_i
=
-\lambda_1
=
1.
\]
Moreover, using \(\sum_{i=1}^n\lambda_i^2=n(n-1)\), we have
\[
\sum_{i=1}^{n-1}t_i^2
=
n(n-1)-1.
\]
Also, every row of $S(G)$ has absolute row sum $n-1$. Hence \(\|S(G)\|_{\infty}=\max_{1\leq i\leq n}\sum_{j=1}^{n}|S(G)_{ij}|=n-1\). Since every eigenvalue $\lambda$ of a matrix satisfies \(
|\lambda|\leq \|S(G)\|_{\infty}\), every Seidel eigenvalue satisfies
\[
|\lambda_i|\leq n-1,\qquad i=1,\ldots,n.
\]
Thus,
\[
|t_i|\leq n-1,\qquad i=1,\ldots,n-1.
\]
Therefore, every $r$-regular graph $G$ of order $n=2r$ determines a feasible point \((t_1,\ldots,t_{n-1})\) of the optimization problem in Lemma~\ref{L333}. Consequently,
\[
\begin{aligned}
\mathcal E_p(S(G))
&=|\lambda_1|^p+\sum_{i=1}^{n-1}|t_i|^p\\
&=1+\sum_{i=1}^{n-1}|t_i|^p\\
&\le
1+\max_{\mathcal M}
\sum_{i=1}^{n-1}|t_i|^p,
\end{aligned}
\]
where $\mathcal M$ is the feasible set in Lemma~\ref{L333}.

By Lemma~\ref{L333},
\[
\max_{\mathcal M}\sum_{i=1}^{n-1}|t_i|^p
=
(n-1)^p+(n-2).
\]
Hence
\[
\mathcal E_p(S(G))
\le
(n-1)^p+(n-1).
\]
Now suppose that equality holds, that is, 
\[\mathcal E_p(S(G))=(n-1)^p+(n-1).
\] 
Then the associated feasible vector \((t_1,\ldots,t_{n-1})\) must attain the maximum in Lemma~\ref{L333}. Hence, up to ordering,
\[
(t_1,\ldots,t_{n-1})
=
\left(
n-1,
\underbrace{-1,\ldots,-1}_{n-2\text{ times}}
\right).
\]
Since $\lambda_1=-1$, we obtain
\[
\operatorname{Spec}(S(G))
=
\left\{
n-1,\,
\underbrace{-1,-1,\ldots,-1}_{(n-1)\text{ times}}
\right\}.
\]
Therefore, by Corollary~\ref{L444},
\[
G\cong K_{r,r}.
\]
Conversely, suppose that \(G\cong K_{r,r}\).

Since $n=2r$, the graph $K_{r,r}$ is an $r$-regular graph of order $n$. By Corollary~\ref{L444},
\[
\operatorname{Spec}(S(G))
=
\left\{
n-1,\,
\underbrace{-1,-1,\ldots,-1}_{(n-1)\text{ times}}
\right\}.
\]
Hence
\[
\mathcal E_p(S(G))
=
|n-1|^p+(n-1)|-1|^p
=
(n-1)^p+(n-1).
\]
Thus equality holds.
\end{proof}

\begin{remark}
For $n=2r$, we have \(\mathcal{E}_p(S(K_n)) = \mathcal{E}_p(S(K_{r,r})) = (n-1)^p+(n-1)\).

Indeed,
\[
\operatorname{Spec}(S(K_n))
=
\left\{
-(n-1),\,
\underbrace{1,1,\ldots,1}_{(n-1)\text{ times}}
\right\},
\]
whereas
\[
\operatorname{Spec}(S(K_{r,r}))
=
\left\{
n-1,\,
\underbrace{-1,-1,\ldots,-1}_{(n-1)\text{ times}}
\right\}.
\]
Thus, the Seidel eigenvalues of $K_n$ and $K_{r,r}$ differ only by sign, and hence their absolute values are the same. Therefore, they
have the same $p$-Seidel energy.

This does not contradict Theorem~\ref{max}, since the theorem considers only $r$-regular graphs of order $n=2r$. The graph $K_n$ is $(n-1)$-regular and, for $n>2$, does not belong to this class.
Hence, within the class considered in Theorem~\ref{max}, equality is attained if and only if $G\cong K_{r,r}$.
\end{remark}

Before proving the next result, we first note a useful relation between the Seidel matrices of a graph and its complement. Since \(A(\overline{G})=J-I-A(G)\), we have
\[
\begin{aligned}
S(\overline{G})
&=J-I-2A(\overline{G})\\
&=J-I-2\bigl(J-I-A(G)\bigr)\\
&=-J+I+2A(G)\\
&=-S(G).
\end{aligned}
\]
Therefore, if \(\lambda_1,\lambda_2,\ldots,\lambda_n\) are the Seidel eigenvalues of $G$, then \(
-\lambda_1,-\lambda_2,\ldots,-\lambda_n\) are the Seidel eigenvalues of $\overline{G}$. Consequently, for every $p>0$,
\[
\mathcal E_p(S(\overline{G}))
=\sum_{i=1}^n |-\lambda_i|^p
=\sum_{i=1}^n |\lambda_i|^p
=\mathcal E_p(S(G)).
\]

\begin{theorem}\label{thm:seidel-n-2r+2}
Let $r\geq 1$, $n=2r+2$, and $p\geq 3$. Among all $r$-regular graphs $G$ of order $n$,
\[
\mathcal E_p(S(G))\leq (n-1)^p+(n-1).
\]
Moreover, equality holds if and only if \(
G\cong K_{r+1}\cup K_{r+1}\).
\end{theorem}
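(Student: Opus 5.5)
The plan is to reduce the statement to Theorem~\ref{max}, using the complement relation $S(\overline{G})=-S(G)$ established just before the statement. If $G$ is $r$-regular of order $n=2r+2$, then $\overline{G}$ is $(n-1-r)$-regular, and $n-1-r=r+1=n/2$. So $\overline{G}$ is an $r'$-regular graph of order $n=2r'$ with $r'=r+1$. We also have $\mathcal E_p(S(\overline{G}))=\mathcal E_p(S(G))$ for every $p>0$.

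Next we apply Theorem~\ref{max} to $\overline{G}$. The standing convention requires graphs to have at least one edge, and $\overline{G}$ does, since it is $(r+1)$-regular with $r+1\ge 2$. For $p\ge3$ this gives
\[
\mathcal E_p(S(G))=\mathcal E_p(S(\overline{G}))\le (n-1)^p+(n-1),
\]
with equality if and only if $\overline{G}\cong K_{r+1,r+1}$.

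It remains to translate the equality case. The complement of $K_{r+1,r+1}$ is the disjoint union of two cliques on the two parts, namely $K_{r+1}\cup K_{r+1}$. Since complementation is an involution and respects isomorphism, $\overline{G}\cong K_{r+1,r+1}$ holds if and only if $G\cong K_{r+1}\cup K_{r+1}$. This is an $r$-regular graph of order $2r+2$, so it lies in the class considered, and we can also check directly that it attains the bound.

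The only delicate point is that Theorem~\ref{max} is stated for every $r\ge1$ with $n=2r$. Its small cases $r'=1,2$ were handled separately in its proof, while $r'\ge3$ relies on Lemma~\ref{L333}, which needs $n\ge6$. Here $r'=r+1\ge2$, so $r'=2$ (that is, $r=1$, $n=4$) falls under the small-case argument, and $r\ge2$ gives $n\ge6$. Beyond checking this, no real obstacle is expected. As a sanity check, for $r=1$ the only $1$-regular graph on $4$ vertices is $2K_2=K_2\cup K_2$, which agrees with the claim.
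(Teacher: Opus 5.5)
Your proposal is correct and matches the paper's proof: pass to $H=\overline{G}$, which is $(r+1)$-regular of order $2(r+1)$, use $S(\overline{G})=-S(G)$ to get $\mathcal E_p(S(G))=\mathcal E_p(S(H))$, apply Theorem~\ref{max}, and translate $\overline{G}\cong K_{r+1,r+1}$ back to $G\cong K_{r+1}\cup K_{r+1}$. Your extra checks are harmless additions the paper leaves implicit: that $\overline{G}$ has an edge, and that $r=1$ falls under the small-case branch of Theorem~\ref{max}.
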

\begin{proof}
Let $G$ be an $r$-regular graph of order \(n=2r+2\), and put \(H=\overline{G}\). Since every vertex of $G$ has degree $r$, every vertex of $H$ has degree \(n-1-r=(2r+2)-1-r=r+1\). Thus, $H$ is an $(r+1)$-regular graph of order \(n=2(r+1)\).

Put $s=r+1$. Then $H$ is a $s$-regular graph of order $n=2s$. Therefore, by Theorem~\ref{max},
\[
\mathcal E_p(S(H))
\leq
(n-1)^p+(n-1),
\]
with equality if and only if \(H\cong K_{s,s}\).

Since \(S(H)=S(\overline{G})=-S(G)\), we have
\[
\mathcal E_p(S(H))
=
\mathcal E_p(S(G)).
\]
Hence
\[
\mathcal E_p(S(G))
\leq
(n-1)^p+(n-1).
\]
Now suppose that equality holds. Then equality also holds for $H$, and hence, by Theorem~\ref{max},
\[
H\cong K_{s,s}=K_{r+1,r+1}.
\]
Since $H=\overline{G}$, taking complements gives
\[
G
\cong
\overline{K_{r+1,r+1}}
=
K_{r+1}\cup K_{r+1}.
\]
Conversely, suppose that \(G\cong K_{r+1}\cup K_{r+1}\). Then
\[
H=\overline{G}\cong K_{r+1,r+1}=K_{s,s}.
\]
By the equality case of Theorem~\ref{max},
\[
\mathcal E_p(S(H))
=
(n-1)^p+(n-1).
\]
Since \(\mathcal E_p(S(G))
=
\mathcal E_p(S(H))\), we obtain
\[
\mathcal E_p(S(G))
=
(n-1)^p+(n-1).
\]
Therefore, equality holds if and only if
\[
G\cong K_{r+1}\cup K_{r+1}.
\]
\end{proof}

\section{Some open problems}\label{seccc5}
We conclude with the following open
questions.

\begin{question}
Let $p>2$. Characterize the graphs that maximize the $p$-Seidel energy among all graphs with fixed order $n$.
\end{question}

\begin{question}
Let $0<p<2$. Characterize the graphs that minimize the $p$-Seidel energy among all graphs with fixed order $n$.
\end{question}

\begin{question}
Let $0<p<2$. Characterize the $r$-regular graphs that maximize (or minimize) the $p$-Seidel energy among all $r$-regular graphs with fixed order $n$.
\end{question}

\begin{question}
Let $p\geq3$. For the parameter pairs $(n,r)$ not covered by Theorems~\ref{max} and~\ref{thm:seidel-n-2r+2}, characterize the $r$-regular graphs that maximize the $p$-Seidel energy among all $r$-regular graphs of order $n$.
\end{question}

\section*{Acknowledgements} 
The authors express sincere gratitude to the Department of Mathematics at Bar-Ilan University (BIU). The second author also gratefully acknowledges the postdoctoral financial support provided by Bar-Ilan University (BIU), Israel.

\end{document}